\theoremstyle{plain}
\newtheorem{theorem}{Theorem}[section]
\newtheorem{proposition}[theorem]{Proposition}
\newtheorem{lemma}[theorem]{Lemma}
\newtheorem{remark}[theorem]{Remark}
\numberwithin{theorem}{section}
\numberwithin{equation}{section}
\newcommand{\average}{{\mathchoice {\kern1ex\vcenter{\hrule height.4pt
width 6pt depth0pt} \kern-9.7pt} {\kern1ex\vcenter{\hrule
height.4pt width 4.3pt depth0pt} \kern-7pt} {} {} }}
\def\R{\mathbb{R}}
\newcommand{\va }{\varphi }
\newcommand{\be}{\begin{equation}}
\newcommand{\ee}{\end{equation}}
\newcommand{\N}{\mathbb{N}}
\newcommand{\cH}{{\mathcal H}}
\newcommand{\eps}{\varepsilon}
\renewcommand{\epsilon}{\varepsilon}
\newcommand{\tauE}{\tau_{\mbox{\tiny $E$}}}
\begin{document}

\title[Curves and surfaces with constant nonlocal mean curvature]
{Curves and surfaces with constant nonlocal mean curvature:
meeting Alexandrov and Delaunay}

\author{Xavier Cabr\'e}
\address{X.C.: ICREA and Universitat Polit\`ecnica de Catalunya, Departament de Matem\`{a}ti\-ca  
Aplicada I, Diagonal 647, 08028 Barcelona, Spain}
\email{xavier.cabre@upc.edu}

\author[Mouhamed M. Fall]
{Mouhamed Moustapha Fall}
\address{M.M.F.: African Institute for Mathematical Sciences of Senegal, 
KM 2, Route de Joal, B.P. 14 18. Mbour, S\'en\'egal}
\email{mouhamed.m.fall@aims-senegal.org}

\author{Joan Sol\`a-Morales}
\address{J.S-M.: Universitat Polit\`ecnica de Catalunya, Departament de Matem\`{a}tica  Aplicada I, 
Diagonal 647, 08028 Barcelona, Spain}
\email{jc.sola-morales@upc.edu}

\author{Tobias Weth}
\address{T.W.: Goethe-Universit\"{a}t Frankfurt, Institut f\"{u}r Mathematik.
Robert-Mayer-Str. 10 D-60054 Frankfurt, Germany}
\email{weth@math.uni-frankfurt.de}

\thanks{The first and third authors are supported by grant MINECO MTM2011-27739-C04-01 and 
are part of the Catalan research group 2014 SGR 1083. The second author's work is supported by the Alexander von Humboldt foundation.}


\begin{abstract}
We are concerned with hypersurfaces of $\mathbb{R}^N$ with constant nonlocal (or fractional) mean curvature.
This is the equation associated to critical points of the fractional perimeter under a volume constraint. 
Our results are twofold. First we prove the nonlocal analogue of the Alexandrov result characterizing 
spheres as the only closed embedded hypersurfaces in $\mathbb{R}^N$ with constant mean curvature.
Here we use the moving planes method. Our second result establishes the existence of periodic bands
or ``cylinders'' in $\mathbb{R}^2$ with constant nonlocal mean curvature and bifurcating from 
a straight band. These are Delaunay type bands in the nonlocal setting. Here we use a Lyapunov-Schmidt
procedure for a quasilinear type fractional elliptic equation.
\end{abstract}

\maketitle

\section{Introduction and main results}
\label{sec:main-result}
Let $\alpha \in (0,1),$ and let $E$ be an open set in $\R^N$  (not necessarily connected, neither bounded) 
with $C^2$-boundary. Then for every $x \in \partial E$, the nonlocal or fractional mean curvature
of $\partial E$ at $x$ (that we call NMC for short) is given by
\begin{equation}
  \label{eq:def-frac-curvature}
H_E(x)= -PV \int_{\R^N} \tauE(y)|x-y|^{-(N+\alpha)}\,dy :=- \lim_{\eps \to 0} \int_{|y-x| \ge \eps} 
\tauE(y)|x-y|^{-(N+\alpha)}\,dy  
\end{equation}
and is well defined. Here and in the following, we use the notation 
$$
\tauE(y):=1_{E}(y) -1_{E^c}(y),
$$
where $E^c$ is the complement of $E$ in  $\R^N$ and $1_A$ denotes the characteristic function of $A$. 
In the first integral PV denotes the principal value sense,
and sometimes will be omitted. The minus sign in front of the integrals makes that balls have constant positive NMC.
For the asymptotics $\alpha$ tending to 0 or 1, $H_E$ should be 
renormalized with a positive constant factor $C_{N,\alpha}$.
Since constant factors are not relevant for the results of this paper, we use the simpler expression in 
(\ref{eq:def-frac-curvature}) without the constant~$C_{N,\alpha}$.

The following is a more geometric expression for the NMC. We will not use it in this paper.
We have that
\begin{equation}\label{geometric}
H_E(x)= -\frac{2}{\alpha} PV \int_{\partial E} |x-y|^{-(N+\alpha)} (x-y)\cdot\nu(y)\,dy, 
\end{equation}
where $\nu(y)$ denotes the outer unit normal to $\partial E$. This follows easily from 
\eqref{eq:def-frac-curvature} after an integration by parts using that
$\nabla_y\cdot\left\{(x-y)|x-y|^{-(N+\alpha)}\right\}=\alpha |x-y|^{-(N+\alpha)}$.
On the other hand, \cite{Abatangelo} introduces the notion of nonlocal directional curvatures
---neither used in this paper.

The nonlocal mean curvature is the Euler-Lagrange equation for the fractional peri\-meter functional,
as first discovered in \cite{Caffarelli2010}; see also \cite{Figalli2014}. 
Nonlocal minimal surfaces are hypersurfaces with zero NMC and were introduced in 2009 by Caffarelli, Roquejoffre,
and Savin~\cite{Caffarelli2010}. They are the limiting configurations of some fractional diffusions 
---for instance, of the fractional Allen-Cahn equation; see the exposition in \cite{Valdinoci} and its references. 
The seminal paper \cite{Caffarelli2010} established the first
existence and regularity theorems. Within these years, there have been important efforts and results concerning
nonlocal minimal surfaces, the following being the main ones. Here we refer to minimizing nonlocal minimal surfaces ---a smaller class than
that of surfaces with zero NMC. When $\alpha$ is close to 1, Caffarelli and Valdinoci~\cite{Caffarelli2011A,Caffarelli2011B}
proved their $C^{1,\gamma}$ regularity
up to dimension $N\leq 7$. This, together with the subsequent results of Barrios, Figalli, and Valdinoci~\cite{Barrios}, 
leads, for $\alpha$ close to 1, to their $C^{\infty}$ regularity up to dimension $N\leq 7$. 
For $N=2$ and any $\alpha$, Savin and Valdinoci~\cite{Savin2012} established  that they are
$C^{\infty}$. Figalli and Valdinoci \cite{Figalli2013} have proved that if they are Lipschitz in $\R^N$, 
then they are $C^\infty$. Finally, D\'avila, del Pino, and Wei \cite{Davila2014B} initiate the important study of 
nonlocal minimal cones in any dimension, characterizing the stability or instability of $\alpha$-Lawson cones.
Besides, they also construct surfaces of revolution with zero NMC, for instance the fractional catenoid. 
Still, apart from dimension $N=2$, there is a lot to be understood,  mainly for the classification of all 
stable nonlocal minimal cones.

Instead, to our knowledge, there are no works on CNMC hypersurfaces, that is, hypersurfaces with constant nonlocal
mean curvature. The purpose of this article is twofold. We establish results both of Alexandrov and of Delaunay type
for the nonlocal mean curvature.

Our first result is the nonlocal or fractional counterpart of the classical result by Alexandrov \cite{A62} 
on the characterization of spheres as the only closed embedded CMC-hypersurfaces. The precise statement
is the following.

\begin{theorem}
\label{sec:introduction-2}
Suppose that $E$ is a nonempty bounded open set with $C^{2,\beta}$-boundary for some $\beta >\alpha$ and with the property that $H_E$ is constant on $\partial E$. Then $E$ is a ball. 
\end{theorem}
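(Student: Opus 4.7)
The plan is to adapt the classical moving planes method to the nonlocal setting, replacing the pointwise Hopf-type maximum principle by an integral comparison that exploits the sign structure of the kernel $|x-y|^{-(N+\alpha)}$. Fix a direction, say $e_1$, and for $\lambda\in\R$ let $T_\lambda=\{x_1=\lambda\}$, let $Q_\lambda$ be the reflection across $T_\lambda$, and write $E_\lambda':=Q_\lambda(E\cap\{x_1>\lambda\})$ for the reflected cap. For $\lambda>\max_{\overline E}x_1$ the cap is empty, and the $C^2$-regularity of $\partial E$ at any point where $e_1$ is an outer normal implies $E_\lambda'\subset E$ for $\lambda$ slightly below $\max_{\overline E}x_1$. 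Hence the critical value
\[
\lambda^\ast:=\inf\bigl\{\lambda\in\R\,:\,E_\mu'\subseteq E\text{ for every }\mu\ge\lambda\bigr\}
\]
is well-defined and finite. I will prove that $E$ is symmetric with respect to $T_{\lambda^\ast}$; applying the same argument in every direction $e\in S^{N-1}$ and using that the center of mass of $E$ must lie on each resulting symmetry hyperplane then forces $E$ to be a Euclidean ball.

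At the critical level $\lambda^\ast$ one of the two standard geometric alternatives must occur: either (a) $\partial E$ and $Q_{\lambda^\ast}(\partial E)$ meet tangentially at some interior point $x_0\in\{x_1<\lambda^\ast\}$, or (b) $T_{\lambda^\ast}$ is tangent to $\partial E$ at a point $x_0\in\partial E\cap T_{\lambda^\ast}$, meaning $\nu(x_0)\cdot e_1=0$. Writing $E^\ast:=Q_{\lambda^\ast}(E)$, the definition of $\lambda^\ast$ and reflection give $E^\ast\cap\{x_1<\lambda^\ast\}\subseteq E\cap\{x_1<\lambda^\ast\}$ and $E\cap\{x_1>\lambda^\ast\}\subseteq E^\ast\cap\{x_1>\lambda^\ast\}$. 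Splitting the integral defining $H_{E^\ast}(x_0)-H_E(x_0)$ across $T_{\lambda^\ast}$ and folding the upper part via the substitution $y\mapsto Q_{\lambda^\ast}(y)$ yields the key identity
\[
H_{E^\ast}(x_0)-H_E(x_0)=\int_{\{y_1<\lambda^\ast\}}\!\bigl[\tau_E(y)-\tau_E(Q_{\lambda^\ast}(y))\bigr]\bigl[|x_0-y|^{-(N+\alpha)}-|x_0-Q_{\lambda^\ast}(y)|^{-(N+\alpha)}\bigr]\,dy.
\]
Both brackets are non-negative: the first by the two inclusions above, and the second thanks to the elementary identity $|x_0-Q_{\lambda^\ast}(y)|^2-|x_0-y|^2=4(x_{0,1}-\lambda^\ast)(y_1-\lambda^\ast)\ge 0$. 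Since $H_E\equiv\mathrm{const}$ on $\partial E$, the value $H_{E^\ast}(x_0)=H_E(Q_{\lambda^\ast}(x_0))$ equals that constant, so the left-hand side vanishes and the integrand must vanish almost everywhere.

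In case (a), where $x_{0,1}<\lambda^\ast$, the second bracket is strictly positive outside a null set and the vanishing of the product immediately forces $\tau_E(y)=\tau_E(Q_{\lambda^\ast}(y))$ a.e., i.e.\ the sought symmetry of $E$ about $T_{\lambda^\ast}$. The main obstacle is case (b): when $x_0\in T_{\lambda^\ast}$ the second bracket vanishes pointwise and the integral identity above is uninformative, so one needs a nonlocal analogue of Serrin's corner lemma. I would obtain it by writing $\partial E$ locally near $x_0$ as a $C^{2,\beta}$-graph over its tangent hyperplane (which contains $e_1$ by the condition $\nu(x_0)\perp e_1$), expanding the function $x\mapsto H_E(x)-H_{E^\ast}(x)$ along $\partial E$ to first and second order in the $e_1$-direction at $x_0$, and exploiting $\beta>\alpha$ to guarantee absolute integrability of the resulting singular integrals. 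This expansion produces a strict sign on a non-trivial derivative along $\partial E$ at $x_0$ unless $E$ is already symmetric about $T_{\lambda^\ast}$. Once this corner refinement is in place, the moving planes argument closes in both alternatives, and the direction-sweep outlined above completes the proof.
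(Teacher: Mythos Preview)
Your overall strategy matches the paper's: moving planes, the same critical value $\lambda^\ast$, and the same two touching alternatives. Your treatment of case~(a) is essentially the paper's Case~1; the folded identity over $\{y_1<\lambda^\ast\}$ is correct, though you should be explicit that the singular integral is taken in the principal value sense (the paper handles this by passing to the limit over $\{|y-x_0|\ge\eps\}$ and using monotone convergence, since $x_0\in\partial E\cap\partial E^\ast$).

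The genuine gap is case~(b). Your sketch---``expanding $H_E(x)-H_{E^\ast}(x)$ to first and second order along $\partial E$ and exploiting $\beta>\alpha$''---does not name the actual mechanism, and the reference to second order is misleading. What the paper does is prove and use a concrete formula for the \emph{first} tangential derivative of the nonlocal mean curvature (Proposition~\ref{sec:introduction-1}):
\[
\partial_v H_E(x)=(N+\alpha)\,PV\int_{\R^N}\tau_E(y)\,|x-y|^{-(N+2+\alpha)}(x-y)\cdot v\,dy,\qquad v\in T_x\partial E.
\]
Since $H_E$ and $H_{E^\ast}$ are constant and $e_1\in T_{x_0}\partial E\cap T_{x_0}\partial E^\ast$, one has $\partial_{e_1}H_E(x_0)=\partial_{e_1}H_{E^\ast}(x_0)=0$. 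Subtracting the two formulas gives
\[
0=\lim_{\eps\to0}\Bigl(\int_{A^1_\eps}|\lambda^\ast-y_1|\,|x_0-y|^{-(N+2+\alpha)}dy+\int_{A^2_\eps}|\lambda^\ast-y_1|\,|x_0-y|^{-(N+2+\alpha)}dy\Bigr),
\]
with $A^1_\eps\subset E\setminus E^\ast$, $A^2_\eps\subset E^\ast\setminus E$; both integrands are nonnegative, so $|E\setminus E^\ast|=0$. No second-order corner argument is needed. The hypothesis $\beta>\alpha$ enters precisely in justifying that the above derivative formula holds and that the $\eps\to0$ limit exists; this is the content of Lemmas~\ref{sec:append-proof-lemma-epsilon}--\ref{sec:sketch-argument-lemma-2}, which require a careful splitting of the integral near $x_0$ using the $C^{2,\beta}$ graph representation of $\partial E$. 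Your proposal gestures at this but does not supply it, and without the derivative formula case~(b) is not closed.
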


After completing our proof, we have learnt that this result has also been established,
at the same time and independently of ours, by Ciraolo, Figalli, Maggi, and Novaga~\cite{Ciraolo2015} ---a paper in which they also prove
stability results with respect to this rigidity theorem.

Our proof of the result relies on the moving planes method introduced by Alexandrov in \cite{A62}.
We also use a formula for the tangential derivatives of the nonlocal mean curvature, 
Proposition~\ref{sec:introduction-1} below. In contrast with the classical case, 
there is no local comparison principle related to the fractional mean curvature. However, nonlocal 
elliptic operators enjoy (by their definition) a very strong global comparison principle. One of its
consequences is the following. 
Connectedness of $E$ is obviously a necessary assumption in classification results for 
CMC-hypersurfaces, whereas the assumptions of Theorem~\ref{sec:introduction-2} allow for disconnected 
sets $E$ with finitely many connected components. Related to this result is the one in \cite{Fall-Jarohs} where fractional overdetermined 
problems are studied on smooth bounded open sets using moving plane techniques and  comparison principles. 
See also \cite{Dalibard--Gerard-Varet}.

The second purpose of our paper is to establish a nonlocal analogue in the plane of the classical result of
Delaunay \cite{Delaunay} on periodic cylinders with constant mean curvature.
We study sets $E\subset\R^2$ with constant nonlocal mean curvature
which have the form of bands or ``cylinders'' in the plane
$$
E=\{(s_1,s_2)\in\R^2: -u(s_1)<s_2<u(s_1)\},
$$
where $u:\R\to(0,\infty)$ is a positive function. In contrast with classical mean curvature,
we note that a straight band $\{-R<s_2<R\}$ has \emph{positive} constant nonlocal mean curvature. 
This can be easily seen either from \eqref{eq:def-frac-curvature} or from \eqref{geometric}.

We establish the existence of a continuous family of bands which are periodic in the first variable~$s_1$, have
all the same constant nonlocal mean curvature, and converge to the straight band. The width $2R$ of the straight
band will be chosen so that the periods of the new bands converge to $2\pi$ as the bands tend to $\{-R<s_2<R\}$.
Our result is of perturbative nature and thus 
we find periodic bands which are all very close to the straight one. 

Therefore we show that, in the nonlocal setting, these objects already exist in dimension 2 ---while they only exist
in dimensions 3 and higher in the classical CMC setting. Our precise result is the following.

\begin{theorem}
\label{res:cyl1}
For every $\alpha\in(0,1)$ there exist $R>0$ and a small $\nu>0$, both depending only on $\alpha$, 
and a continuous family of periodic functions $u_a:\R\to\R$ parameterized by 
$a\in(-\nu,\nu)$, for which the sets
$$
E_a=\{(s_1,s_2)\in\R^2: -u_{a}(s_1)<s_2<u_{a}(s_1)\}
$$ 
have all the same nonlocal mean curvature, equal to a constant $h_R>0$ depending only on $\alpha$,
and converge to the straight band $\{-R<s_2<R\}$ as $a\to 0$.
Moreover, $E_a\not= E_{a'}$ for $a\not= a'$, $u_a$ has minimal period ${2\pi}/{\lambda(a)}$ if $a\not= 0$, 
and the periods
$$
2\pi/\lambda(a)\to 2\pi \qquad\text{as } a\to 0.
$$

In addition, $u_{a}$ is of class $C^{1,\beta}(\R)$ for some $\beta\in (\alpha,1)$, even 
with respect to $s_1=0$, and of the form
\begin{equation}\label{form}
  u_{a}(s)=R+\frac{a}{\lambda(a)}\left\{\cos\left(\lambda(a)s\right)+v_{a}(\lambda(a)s)\right\}
\end{equation}
with $v_{a}\to 0$ as $a\to 0$ in the norm of $C^{1,\beta}(\R)$, 
and with $v_{a}=v_{a}(\sigma)$ satisfying $\int_0^{2\pi}v_{a}(\sigma)\cos(\sigma)\, d\sigma=0$.
\end{theorem}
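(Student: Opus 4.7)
My plan is to cast the constant-NMC condition for symmetric bands as a bifurcation equation and apply a Crandall--Rabinowitz argument. Since $u$ is taken even and $E_u$ is symmetric under $s_2\mapsto -s_2$, the kernel symmetry forces the NMC on the lower boundary to coincide with that on the upper one, so it suffices to impose $H_{E_u}(s_1, u(s_1)) = h_R$ on the upper boundary, where $h_R>0$ is the constant NMC of the straight band $\{|s_2|<R\}$. Writing $u(s_1) = R + w(s_1)$ turns this into a nonlinear nonlocal equation $\mathcal{F}_R(w) = 0$ for even $w$, and to allow for a variable period I seek $w(s_1) = \lambda^{-1}\psi(\lambda s_1)$ with $\psi$ even and $2\pi$-periodic in $\sigma := \lambda s_1$. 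The rescaled equation $G(\lambda, \psi) = 0$ will be set on a space of even $2\pi$-periodic functions in a Hölder class $C^{1,\beta}$ with $\beta > \alpha$; the trivial solution $\psi\equiv 0$ holds for every $\lambda>0$, and bifurcation is expected at a distinguished pair $(\lambda_0, R)$.

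The crucial step is the spectral analysis of $L_\lambda := D_\psi G(\lambda, 0)$. By translation invariance in $s_1$ and the evenness symmetry, $L_\lambda$ is a Fourier multiplier in the basis $\{\cos(k\sigma)\}_{k\ge 0}$ with eigenvalues $\mu_k(\lambda,R)$ given by explicit singular integrals against $|x-y|^{-(2+\alpha)}$, obtained by inserting $\psi=\cos(k\cdot)$ into the linearization of the NMC about $u\equiv R$. Using the homogeneity of the kernel, the joint dependence on $\lambda$ and $R$ should collapse to a clean one-parameter family. I then select $R=R(\alpha)>0$ and normalize $\lambda_0 = 1$ so that $\mu_1(1,R) = 0$, $\mu_k(1,R)\ne 0$ for all $k\ne 1$ (with the $k=0$ direction corresponding to the volume/translation constraint handled separately), and the transversality condition $\partial_\lambda \mu_1(1,R)\ne 0$ holds. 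The Crandall--Rabinowitz theorem then yields a continuous curve $a\mapsto(\lambda(a), \psi_a)$ through $(1,0)$ with $\psi_a(\sigma)=a\cos(\sigma)+a\,v_a(\sigma)$, $v_a\to 0$ in $C^{1,\beta}$, and $\int_0^{2\pi} v_a(\sigma)\cos(\sigma)\,d\sigma = 0$. Undoing the rescaling gives the stated form of $u_a$ together with $2\pi/\lambda(a)\to 2\pi$, and the uniqueness and minimal-period claim follow from the one-dimensionality of the kernel. The $C^{1,\beta}$ regularity of $u_a$ is obtained by a nonlocal bootstrap directly from the CNMC equation, using the pseudo-differential order $\alpha$ of $L_\lambda$.

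The main obstacle will be the spectral analysis and verifying that the Crandall--Rabinowitz framework genuinely applies. Establishing Fréchet differentiability of $G$ on the chosen Hölder space is delicate because $G$ is a \emph{quasilinear} nonlocal operator: in the geometric formulation, the integrand of $H_{E_u}$ at $(s_1,u(s_1))$ depends on $u'(s_1)$ and $u(s_1)-u(y_1)$ inside a principal-value integral, so careful truncation and integration by parts is needed to control the linearization and its remainders uniformly. Equally delicate is extracting enough explicit information about $\mu_k(1,R)$ to pin down the critical $R=R(\alpha)$, to certify that no resonances occur at other Fourier modes, and to verify the transversality $\partial_\lambda\mu_1(1,R)\ne 0$; these are the points where the nonlocal problem departs sharply from the classical Delaunay construction and where the analytic heart of the proof lies.
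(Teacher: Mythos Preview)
Your proposal is essentially the same strategy the paper follows: write the CNMC condition for the symmetric band as a nonlinear nonlocal equation on even $2\pi$-periodic functions after rescaling by $\lambda$, diagonalize the linearization in the basis $\{\cos(k\sigma)\}_{k\ge0}$, choose $R=R(\alpha)$ so that the $k=1$ eigenvalue vanishes while the others do not, verify transversality in $\lambda$, and then run Crandall--Rabinowitz (which the paper implements by hand as Lyapunov--Schmidt plus the implicit function theorem after dividing the equation by $a$). Your identification of the two main difficulties---Fr\'echet differentiability of the quasilinear nonlocal operator in $C^{1,\beta}$ and the explicit spectral computation---matches exactly where the paper invests its effort.

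Two small corrections. First, the $k=0$ mode is not a ``volume/translation constraint handled separately'': the paper simply computes $\lambda_0<0$, so the constant mode is a genuine nonzero eigenvalue and requires no special treatment. Second, the device the paper uses to prove $C^1$ regularity of the operator is not integration by parts but a symmetrization: writing the principal part as $\int (\delta_-\varphi+\delta_+\varphi)\,F_3(a,\delta_-\varphi,\delta_+\varphi)\,d\mu(t)$, where the second-order increment $\delta_-\varphi+\delta_+\varphi$ gains the factor $|t|^\beta$ needed to make everything integrable against $|t|^{-1-\alpha}\,dt$. This is the concrete mechanism behind what you call ``careful truncation,'' and it is worth knowing since a naive integration by parts is awkward here.
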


The smoothness of these curves, and in general of graphs in $\R^N$ with constant NMC, follows
from the methods and results of Barrios, Figalli, and Valdinoci~\cite{Barrios} on nonlocal minimal graphs.

In a forthcoming work, \cite{Cabre2015A}, we treat this Delaunay type result in $\R^N$.
In  \cite{Cabre2015A} we will also show further properties of the CNMC curves $s_2=u_a(s_1)$; for instance, 
for $a>0$, they are decreasing in $(0,\pi/\lambda(a))$ (the first half of their period).

For a related nonlocal equation, but different than nonlocal mean curvature,
the recent paper \cite{Davila2015} by D\'avila, del Pino, Dipierro, and Valdinoci establishes variationally
the existence of periodic and cylindrical symmetric hypersurfaces. These surfaces minimize a 
certain fractional perimeter under a volume constraint.

Regarding nonlocal minimal curves in the plane, let us mention here that Savin and Valdinoci \cite{Savin2012}
have proved that the only minimizing nonlocal minimal curves in the plane are the straight lines.  
A minimizing nonlocal minimal curve is the boundary of a set that
minimizes fractional perimeter in compact sets (and, in particular, has zero NMC). Another important rigidity theorem is that
of Figalli and Valdinoci \cite{Figalli2013} establishing that, in $\R^{3}$, nonlocal minimal graphs of functions 
$u:\R^{2}\to\R$ are necessarily planes.

To prove Theorem~\ref{res:cyl1} we use the Lyapunov-Schmidt procedure. We were inspired by the results on periodic
solutions in Ambrosetti and Prodi~\cite{Ambrosetti}, as well as in our work in a simpler setting (the 
semilinear one) that we are carrying out in \cite{Cabre2015B}. The first step in the proof, as in the case of 
nonlocal minimal cones in \cite{Davila2014B} and 
nonlocal minimal graphs in \cite{Barrios}, is to write the NMC operator acting on graphs 
of functions ---the functions $u_a$ above. The result is a nonlinear fractional operator of quasilinear type acting on functions
$u=u(s)$  ---see the operators \eqref{graphNMC}, \eqref{maineq2}, \eqref{Phi10}, 
and \eqref{Phi220} below. Its linearization gives rise to 
the fractional Laplacian $(-\Delta)^{(1+\alpha)/2}$ and some convolution operators. We use 
both Fourier series and H\"older spaces to treat all these operators. Let us mention that the implicit function theorem
in weighted H\"older spaces has already been used in the important study of nonlocal minimal
cones by D\'avila, del Pino, and Wei \cite{Davila2014B}.

Related to our work, the nice papers by Sicbaldi~\cite{Sicbaldi2010} and by Schlenk and Sicbaldi~\cite{Sicbaldi2012}
establish the existence of periodic and cylindrical symmetric domains in $\R^N$ whose first Dirichlet
eigenfunction has constant Neumann data on the boundary. This is therefore a
nonlinear nonlocal operator ``based'' on the half-Laplacian or Dirichlet to Neumann map. Their papers also
use the Lyapunov-Schmidt procedure.

The articles \cite{Dipierro,Davila2014A} also use the Lyapunov-Schmidt method, here for some 
semilinear fractional elliptic equations.

Our paper is organized as follows. The following two sections concern the Alexandrov type result. Section~2
treats the moving planes method, while section~3 establishes a formula for the tangential derivatives of
the NMC. Sections~4 to 6 are dedicated to the Delaunay type result. The first one sets up the 
nonlinear nonlocal operator to be studied, as well as the Lyapunov-Schmidt procedure
and functional spaces to be used. Section~5 is devoted to the study of the linearization of our
nonlinear problem, while section~6 establishes the $C^1$ character of our nonlinear operators.

\section{The moving plane argument}
\label{sec:introduction}

This section is devoted to the proof of Theorem~\ref{sec:introduction-2}. We need the following proposition
which might also be useful for other problems. It gives a formula for the tangential derivatives along $\partial E$
of the NMC.

\begin{proposition}
\label{sec:introduction-1}
If $E \subset \R^N$ is bounded and $\partial E$ is of class $C^{2,\beta}$ for some $\beta> \alpha$, then $H_E$ is of class $C^1$ on $\partial E$, and we have 
$$
\partial_v H_E(x) = (N+\alpha)\, PV \int_{\R^N} \tauE(y)  |x-y|^{-(N+2+\alpha)} (x-y) \cdot v \,dy
$$
for $x \in \partial E$ and $v \in T_x \partial E$.
\end{proposition}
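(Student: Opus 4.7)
The plan is to differentiate under the principal value integral: since $\nabla_x|x-y|^{-(N+\alpha)}=-(N+\alpha)|x-y|^{-(N+2+\alpha)}(x-y)$, applying this formally to $H_E(x)=-PV\int\tauE(y)|x-y|^{-(N+\alpha)}dy$ yields the claimed expression. The subtlety is that the resulting kernel has singularity of order $|x-y|^{-(N+1+\alpha)}$, which is too strong to be absolutely integrable against $\tauE$ on the thin lens region adjacent to $\partial E$. First I would establish that the principal value on the right-hand side is nevertheless well-defined. Let $T_x$ denote the tangent half-space to $E$ at $x$ (the half-space with boundary equal to the tangent plane of $\partial E$ at $x$, lying on the same side as $E$ locally). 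By reflection symmetry through $\partial T_x$, combined with the tangentiality of $v\in T_x\partial E$, one has
\[
\int_{|y-x|\geq \eps}\tau_{T_x}(y)\,|x-y|^{-(N+2+\alpha)}(x-y)\cdot v\,dy=0\qquad\text{for every }\eps>0,
\]
since under the reflection $y\mapsto y^\ast$ through $\partial T_x$ one has $\tau_{T_x}(y^\ast)=-\tau_{T_x}(y)$, $|x-y^\ast|=|x-y|$, and $(x-y^\ast)\cdot v=(x-y)\cdot v$. It therefore suffices to analyze the convergence, as $\eps\to 0$, of $\int_{|y-x|\geq \eps}[\tauE(y)-\tau_{T_x}(y)]|x-y|^{-(N+2+\alpha)}(x-y)\cdot v\,dy$. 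In tangent-normal coordinates $y=x+(z',z_N)$ in which $\partial E$ is the graph $z_N=\varphi(z')$ with $\varphi(0)=0$, $\nabla\varphi(0)=0$, and $\varphi\in C^{2,\beta}$, this integrand is supported in the lens between the graph and its tangent plane. Integrating $z_N$ out first and Taylor-expanding $\varphi=\tfrac12Q+R$ with $Q(z')=\langle D^2\varphi(0)z',z'\rangle$ and $R(z')=O(|z'|^{2+\beta})$, the leading contribution reduces to a multiple of $\int(z'\cdot v)\,Q(z')\,|z'|^{-(N+2+\alpha)}\,dz'$, which vanishes for every $\eps>0$ by angular parity (the factor $z'\cdot v$ is odd in $z'$, while $Q$ is even), and the remainder is dominated by a multiple of $\int_0^r\rho^{\beta-\alpha-1}\,d\rho$, finite precisely because $\beta>\alpha$. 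Hence the principal value exists, and the same estimates give continuity in $x$.

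With the right-hand side defined and continuous, I would compute $\partial_v H_E(x)$ using a $C^1$ curve $\gamma\subset\partial E$ with $\gamma(0)=x$ and $\gamma'(0)=v$. The difference $H_E(\gamma(t))-H_E(x)$ equals, in the joint-principal-value sense obtained by removing $\eps$-balls around both $\gamma(t)$ and $x$ simultaneously,
\[
-\int \tauE(y)\bigl[|\gamma(t)-y|^{-(N+\alpha)}-|x-y|^{-(N+\alpha)}\bigr]\,dy,
\]
where the lens-volume estimate near each center ensures convergence. By the fundamental theorem of calculus,
\[
|\gamma(t)-y|^{-(N+\alpha)}-|x-y|^{-(N+\alpha)} = -(N+\alpha)\int_0^t|\gamma(s)-y|^{-(N+2+\alpha)}(\gamma(s)-y)\cdot\gamma'(s)\,ds.
\]
Interchanging the $s$- and $y$-integrals---justified by first subtracting the vanishing half-space $\tau_{T_{\gamma(s)}}$-contributions and then applying Fubini to the absolutely convergent remainder with bounds uniform in $s$---yields
\[
H_E(\gamma(t))-H_E(x) = \int_0^t(N+\alpha)\,PV\!\int \tauE(y)\,|\gamma(s)-y|^{-(N+2+\alpha)}(\gamma(s)-y)\cdot\gamma'(s)\,dy\,ds.
\]
Dividing by $t$, letting $t\to 0$, and using continuity in $s$ of the inner principal value (from the previous paragraph) gives the claimed formula. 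The same continuity, now in $x$, implies $H_E\in C^1(\partial E)$.

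The main obstacle is the existence of the principal value on the right-hand side, treated in the first paragraph: this rests crucially on the reflection-symmetry cancellation of the tangent half-space $T_x$, which reduces the analysis to the thin lens region, together with the vanishing of the leading quadratic moment by angular parity, without which the divergence of $\int_0^r\rho^{-1-\alpha}\,d\rho$ at $0$ would be fatal. The hypothesis $\beta>\alpha$ is essential at this final step---it is exactly what makes $\int_0^r\rho^{\beta-\alpha-1}\,d\rho$ finite---whereas the remaining ingredients (interchange of integration, passage to the limit $t\to 0$) are routine once these uniform estimates are in hand.
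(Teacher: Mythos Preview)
Your first paragraph is essentially correct and contains the heart of the matter: the angular-parity cancellation of the quadratic part together with the $\beta>\alpha$ bound on the $C^{2,\beta}$ remainder is exactly what the paper uses. The paper organizes it slightly differently---rather than subtracting the tangent half-space and then the quadratic lens, it splits the annulus $A(\eps',\eps)$ directly into $A^\pm=\{z_N\gtrless q(z')\pm c|z'|^{2+\beta}\}$ and the thin set $A^0$ between; on $A^\pm$ the function $\tauE$ is constant and the integral vanishes by the same $z'\mapsto -z'$ parity, while $A^0$ has thickness $O(|z'|^{2+\beta})$---but the content is identical.

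The gap is in your second paragraph, at the interchange of the $s$- and $y$-integrals. After subtracting $\tau_{T_{\gamma(s)}}$, the remainder $\tauE-\tau_{T_{\gamma(s)}}$ is \emph{not} absolutely integrable against $|\gamma(s)-y|^{-(N+2+\alpha)}(\gamma(s)-y)\cdot\gamma'(s)$: the quadratic lens contributes like $\int_0^r\rho^{-1-\alpha}\,d\rho$, which diverges. Only after the further parity cancellation does one get an absolutely convergent piece---but that cancellation is tied to the symmetric truncation $\{|y-\gamma(s)|\geq\eps\}$, not to the joint truncation $\{|y-x|\geq\eps\}\cap\{|y-\gamma(t)|\geq\eps\}$ you start from. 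So ``applying Fubini to the absolutely convergent remainder'' is not justified as stated, and reconciling the mismatched truncations is precisely the delicate point you have skipped.

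The paper sidesteps this entirely by a different route: it works with the truncated curvature $H_\eps(x)=-\int_{|z|\geq\eps}\tauE(x+z)|z|^{-(N+\alpha)}\,dz$, which is genuinely $C^1$ on $\R^N$. Its tangential derivative splits into the expected volume term plus a sphere term $\eps^{-(N+1+\alpha)}\int_{|z|=\eps}\tauE(x+z)\,z\cdot v\,d\sigma(z)$. The same parity-plus-remainder argument shows that the volume terms form a Cauchy family in $C(\partial E)$ and the sphere term is $O(\eps^{\beta-\alpha})\to 0$ uniformly; uniform convergence of $C^1$ approximations then gives $H_E\in C^1$ with the claimed derivative, and no Fubini is needed. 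Your curve-and-FTC route can be salvaged, but to control the mismatched truncations you would essentially have to reproduce these uniform $H_\eps$ estimates---including the sphere bound, which has no counterpart in your outline.
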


Here, $T_x \partial E$ denotes the tangent space to $\partial E$ at $x$.
Since the proof of this proposition is somewhat technical, we postpone it to section~3. 
With the help of this proposition, we may now complete the

\begin{proof}[Proof of Theorem~\ref{sec:introduction-2}.]
Since the fractional mean curvature is invariant under reflections, translations and rotations, 
it suffices to show that every set $E \subset \R^N$ satisfying the assumptions of Theorem~\ref{sec:introduction-2} 
is convex and symmetric in $x_1$ after a translation in the $x_1$-direction. To prove this, we introduce some notation. 
As usual, we let $\nu$ denote the unit normal vector field on $\partial E$ pointing outside $E$. 

For $\lambda \in \R$, we let $Q_\lambda: \R^N \to \R^N $ denote the reflection with respect to the hyperplane $\{x_1 = \lambda\}$. Moreover, we put $E^\lambda:= Q_\lambda(E)$ and 
$$
E^\lambda_-:= \{x \in E^\lambda \::\: x_1 < \lambda \}.
$$
Note that $E^\lambda_- = \varnothing$ if $\lambda>0$ is sufficiently large, whereas $E^\lambda_- =E^\lambda \not \subset E$ if $-\lambda$ is sufficiently large.  We define
$$
\lambda^*: = \inf \{ \lambda \in \R\::\: \text{$E^\mu_- \subset E$ for $\mu \ge \lambda$, and $\nu_1(x)>0$ for all $x \in \partial E$ with $x_1>\lambda$} \}.
$$

We claim that 
\begin{equation}
  \label{eq:final-claim}
E = E^{\lambda^*} .
\end{equation}
This will complete the proof of the theorem. To prove the claim, we first note that 
\begin{equation}
  \label{eq:trivial-implication}
E^{\lambda^*}_- \subset E,   
\end{equation}
since $\partial E$ is of class $C^2$.  Moreover, by standard arguments detailed e.g. in \cite[Section 5.2]{FR00}
(see also a sketch of the argument in Remark~\ref{cases12} below),
there exists a point $x \in \partial E$ having one of the following properties:\\[0.2cm]
{\em Case 1 (interior touching):} $x_1 < \lambda^*$ and $x \in \partial E^{\lambda^*}$. \\[0.2cm]
{\em Case 2 (non-transversal intersection):} $x_1=\lambda^*$ and $e_1:= (1,0,\dots,0) \in T_x E$.
\\[0.2cm]
We treat these cases separately, and we adjust the notation first. For simplicity, we will write $\lambda$ instead of 
$\lambda^*$ and $Q$ instead of $Q_\lambda=Q_{\lambda^*}$ in the following. Obviously, we have that 
\begin{equation}
  \label{eq:tau-1}
  \tauE \equiv \tau_{E^\lambda} \qquad \text{in $E \cap E^\lambda$ and in $\R^N \setminus (E \cup E^\lambda)$,}
\end{equation}
whereas 
\begin{equation}
  \label{eq:tau-2}
  \tauE \equiv -\tau_{E^\lambda} \equiv 1 \; \text{in $E \setminus E^\lambda$}\qquad \text{and}\qquad 
 \tau_{E^\lambda} \equiv - \tauE  \equiv 1 \; \text{in $E^\lambda  \setminus E$}.
\end{equation}
We now let $\cH$ denote the half space $\{x_1 < \lambda\}$, and we note that, by (\ref{eq:trivial-implication})
---recall that $\lambda=\lambda^*$---,
$$
E \setminus E^\lambda \subset \cH \qquad \text{and}\qquad E^\lambda  \setminus E = Q(E \setminus E^\lambda) \subset  Q(\cH).
$$

Finally, for $\eps>0$ and $x\in\R^{N}$ the point in either Case 1 or Case 2, we set
$$
A^1_\eps:= \{y \in E \setminus E^\lambda\::\: |y-x| \ge \eps\} \subset \cH \quad \text{and}\quad  A^2_\eps:= \{y \in E^\lambda \setminus E\::\: |y-x| \ge \eps\} \subset  Q(\cH).
$$

We first consider {\bf Case 1:}  From (\ref{eq:tau-1}) and (\ref{eq:tau-2}), it then follows that
\begin{align*}
0 &=\frac{1}{2} \Bigl(H_{E}(Q(x)) - H_E(x)  \Bigr)= \frac{1}{2} \Bigl(H_{\mbox{\tiny $E^\lambda$}}(x) - H_E(x)  \Bigr)\\
&= \frac{1}{2} PV  \int_{\R^N} \bigl( \tauE(y)-\tau_{\mbox{\tiny $E^\lambda$}}(y)\bigr)
|x-y|^{-(N+\alpha)}\,dy\\
 &= \lim_{\eps \to 0} \Bigl( \int_{A^1_\eps} |x-y|^{-(N+\alpha)}\,dy - \int_{A^2_\eps} |x-y|^{-(N+\alpha)}\,dy\Bigr)\\
 &= \lim_{\eps \to 0} \int_{A^1_\eps} |x-y|^{-(N+\alpha)}\,dy - \int_{E^\lambda \setminus E} |x-y|^{-(N+\alpha)}\,dy.
\end{align*}
Here we have used that $\partial E$ and $\partial E^{\lambda}$ are sets of measure zero since $\partial E$ is $C^{2}$, and
the fact that the second integral in the last line exists since $x \in \cH$ and $E^\lambda  \setminus E \subset Q(\cH)$.  Thus, by monotone convergence we have 
$$
\int_{E^{\lambda}\setminus E} |x-y|^{-(N+\alpha)}\,dy = 
\lim_{\eps \to 0} \int_{A^1_\eps} |x-y|^{-(N+\alpha)}\,dy = \int_{E\setminus E^{\lambda}} |x-y|^{-(N+\alpha)}\,dy
$$
and therefore  
\begin{align*}
0&=  \int_{E \setminus E^\lambda} |x-y|^{-(N+\alpha)}\,dy - \int_{E^\lambda \setminus E} |x-y|^{-(N+\alpha)}\,dy\\&= \int_{E \setminus E^\lambda} \Bigl( |x-y|^{-(N+\alpha)} - |x-Q(y)|^{-(N+\alpha)} \Bigr)\, dy.
\end{align*}
Since $E \setminus E^\lambda \subset \cH$ and $|x-y| < |x -Q(y)|$ for $y \in \cH$, we deduce that $|E \setminus E^\lambda|= 0$ and thus $E$ equals $E^\lambda$ up to a set of measure zero. Since both $E$ and $E^\lambda$ are bounded sets with $C^2$ boundary, we conclude that $E= E^\lambda$, as claimed in (\ref{eq:final-claim}).

We now consider {\bf Case 2:} Since $e_1:= (1,0,\dots,0) \in T_x E \cap T_x E^\lambda$ and $H_E$ (respectively, $H_{E^\lambda}$) are constant functions on $\partial E$ (respectively, $\partial E^\lambda$),  we have $\partial_{e_1} H_E(x) =\partial_{e_1} H_{E^\lambda}(x)=0$. Applying 
Proposition~\ref{sec:introduction-1} and recalling (\ref{eq:tau-1}) and (\ref{eq:tau-2}), 
we find that 
\begin{align*}
0 &= \frac{1}{2}\bigl(\partial_{e_1} H_{E}(x)- \partial_{e_1} H_{E^\lambda}(x) \bigr)\\
 &= \frac{1}{2}(N+\alpha) PV \int_{\R^N} (x_1-y_1) \bigl(\tauE(y)- \tau_{\mbox{\tiny $E^{\lambda}$}}(y)\bigr)   |x-y|^{-(N+2+\alpha)} \,dy\\
 &= (N+\alpha) \lim_{\eps \to 0} \Bigl( \int_{A^1_\eps} (\lambda-y_1) |x-y|^{-(N+2+\alpha)}\,dy - \int_{A^2_\eps} (\lambda-y_1) |x-y|^{-(N+2+\alpha)}\,dy\Bigr)\\
&= (N+\alpha) \lim_{\eps \to 0}  \Bigl( \int_{A^1_\eps} |\lambda-y_1| |x-y|^{-(N+2+\alpha)}\,dy + \int_{A^2_\eps} |\lambda-y_1| |x-y|^{-(N+2+\alpha)}\,dy\Bigr).  
\end{align*}
Hence it follows that 
$$
\lim_{\eps \to 0} \int_{A^1_\eps} |\lambda-y_1| |x-y|^{-(N+2+\alpha)}\,dy= 0.
$$
Since $A^1_{\eps'}  \supset A^1_{\eps}$ for  $0< \eps' \le  \eps$ and the integrand is a continuous positive function in $A^1_{\eps}
\subset \cH$, this implies that $|A^1_\eps| = 0$ for every $\eps>0$ and thus $|E \setminus E^\lambda|= 0$. As in Case~1, we conclude that $E= E^\lambda$, as claimed in (\ref{eq:final-claim}).
\end{proof}

\begin{remark}\label{cases12}
{\rm
Here we sketch the argument showing that either Case 1 or 2 in the previous proof must happen
when $\lambda=\lambda^*$.

Let us denote points by $x=(x_1,x')$ and, for every $\lambda$, $T^\lambda:=\{x'\in\R^{N-1} : (\lambda,x')\in E\}
\subset\R^{N-1}$. By the definition of $\lambda^*$ it is clear that, for every $x'\in T^{\lambda^*}$, the set
$\{x_1\in\R : x_1>\lambda^* \text{ and } (x_1,x')\in E\}$ is an open interval. Therefore, since in addition
$\nu_1 (x)>0$ for all $x\in\partial E$ with $x_1>\lambda^*$ (by definition of $\lambda^*$), we have that
\begin{equation}\label{graphlam}
\partial E\cap\{x_1>\lambda^*\}=\{(\va^{\lambda^*}(x'),x') : x'\in T^{\lambda^*}\}
\end{equation}
for a $C^1$ function $\va^{\lambda^*}$ on the open set $T^{\lambda^*}\subset \R^{N-1}$. The previous statement follows
from the inverse function theorem.

Suppose now that Case 2 did not happen. Then, we would have $\nu_1 (x)>0$ for all $x\in\partial E$ with 
$x_{1}\geq \lambda^*$. Thus, by the inverse function theorem, $\va^{\lambda^*}$ would be $C^1$ up to the boundary
$\partial T^{\lambda^*}$. The implicit function theorem also gives that \eqref{graphlam} would also hold with
$\lambda^*$ replaced by $\mu$ if $\mu\in (\lambda^*-\delta,\lambda^*)$ for some small $\delta >0$
---with $\va^{\lambda^*}$ replaced by a new $C^{1}$ function $\va^{\mu}$. 

Hence, for a smaller $\eta>0$, the reflection of $\{E\cap\{\mu<x_1<\lambda^*+\eta\}\}$ with respect to $\{x_1=\mu\}$ 
is still contained in $E$, if $\mu\in (\lambda^*-\eta,\lambda^*)$. Now, the reflection of
$K:=\overline E\cap \{x_1\geq \lambda^*+\eta\}$ with respect to $\{x_1=\lambda^*\}$ 
is a compact set contained in $\overline E$. If we assume that Case 1 neither happens, then
$K\cap\partial E=\varnothing$ and thus $K$ is at a positive distance from $\partial E$. By continuity, it follows that
all reflections of $K$ with respect to $\{x_1=\mu\}$ are also contained in $E$ for 
$\mu\in (\lambda^*-\eta',\lambda^*)$ (for a perhaps smaller $\eta'>0$). This is a contradiction, since then
$E^\mu_-\subset E$ for $\mu\in (\lambda^*-\eta',\lambda^*)$, contradicting the definition of $\lambda^*$.
}
\end{remark}

\section{Tangential derivatives of the NMC. Proof of Proposition~\ref{sec:introduction-1}}\label{s:app:de-curv}
In this section we prove Proposition~\ref{sec:introduction-1}. Let $E \subset \R^N$ be bounded and such that $\partial E$ is of class $C^{2,\beta}$ for some $\beta> \alpha$. As before, we let $\nu$ denote the unit normal vector field on $\partial E$ pointing outside $E$.

\begin{lemma}
\label{sec:append-proof-lemma-epsilon}
For $\eps>0$ sufficiently small, the function 
$$
H_\eps : \R^N \to \R, \qquad H_\eps(x)= -\int_{|z| \ge \eps } \tauE(x+z)|z|^{-(N+\alpha)}\,dz 
$$
is of class $C^1$ with 
\begin{equation}
\frac{\partial H_\eps}{\partial x_i} (x) = - (N+\alpha) \int_{|z| \ge \eps}   \tauE(x+z) |z|^{-(N+2+\alpha)} z_i \,dz + \eps^{-(N+1+\alpha)} \int_{|z|= \eps} \tauE(x+z)z_i  \,d\sigma(z) \label{eq:deriv-H_eps}
\end{equation}
for $i=1,\dots,N$.
\end{lemma}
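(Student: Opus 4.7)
The plan is to reposition the $x$-dependence from inside the argument of $\tauE$ to the domain of integration, then differentiate using a Leibniz rule for moving domains. Changing variables $w=x+z$ in the definition of $H_\eps$ gives
\begin{equation*}
H_\eps(x)=-\int_{\R^N\setminus B_\eps(x)}\tauE(w)\,|w-x|^{-(N+\alpha)}\,dw,
\end{equation*}
an integral in which the integrand is bounded by $\eps^{-(N+\alpha)}$ on its domain (since $|w-x|\ge\eps$ there) and the only $x$-dependence of the domain is a rigid translation of the excluded ball $B_\eps(x)$.

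I would then apply Reynolds transport to this expression. At a point $w\in\partial B_\eps(x)$ of the boundary of $\Omega(x):=\R^N\setminus\overline{B_\eps(x)}$, the outward normal to $\Omega(x)$ is $-(w-x)/\eps$, and as $x_i$ increases the boundary translates with velocity $e_i$, so the normal-velocity factor is $-(w_i-x_i)/\eps$. Together with the pointwise identity $\partial_{x_i}|w-x|^{-(N+\alpha)}=(N+\alpha)(w_i-x_i)|w-x|^{-(N+\alpha+2)}$, this produces the bulk and surface terms of \eqref{eq:deriv-H_eps} after undoing the substitution. In particular the factor $\eps^{-(N+\alpha+1)}$ in the surface integral arises as $\eps^{-(N+\alpha)}$ (the value of $|w-x|^{-(N+\alpha)}$ on the sphere) multiplied by the $\eps^{-1}$ from the normal velocity.

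Because $\tauE$ is only in $L^\infty$, I would not invoke a general moving-domain Leibniz result but work directly with the difference quotient, splitting the $z$-integration into $A_h:=\{|z|\ge\eps,\,|z-he_i|\ge\eps\}$ and the thin lens-shaped regions $B_h:=\{|z|\ge\eps,\,|z-he_i|<\eps\}$ and $C_h:=\{|z|<\eps,\,|z-he_i|\ge\eps\}$, each of measure $O(h)$. On $A_h$ the mean value bound $\bigl||z-he_i|^{-(N+\alpha)}-|z|^{-(N+\alpha)}\bigr|\le C|h|\,|z|^{-(N+\alpha+1)}$ combined with $|\tauE|\le 1$ gives the bulk term by dominated convergence. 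On $B_h\cup C_h$, parametrizing in spherical coordinates based at the origin shows that to leading order these sets are the caps $\{|z|=\eps,\ \pm z_i>0\}$ thickened by width $h|z_i|/\eps$; integrating against the almost constant factor $|z|^{-(N+\alpha)}\approx\eps^{-(N+\alpha)}$ produces the surface term in the limit.

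The main obstacle is this moving-boundary contribution: while the lens-region geometry is elementary, the calculation must be done carefully because $\tauE$ has no regularity, only boundedness. Once the formula \eqref{eq:deriv-H_eps} is established, continuity of both terms in $x$---and hence the $C^1$ conclusion---follows by another application of dominated convergence, using that $\tauE$ is bounded and that the bulk kernel is integrable on $\{|z|\ge\eps\}$ after subtracting the constant piece coming from writing $\tauE = 2\cdot \mathbf{1}_E - 1$ with $E$ bounded.
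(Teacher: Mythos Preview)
Your direct difference-quotient approach is genuinely different from the paper's and is a valid alternative strategy. The paper instead approximates $\tauE$ by smooth compactly supported functions $\tau_n$ (with $|\tau_n|\le 1$ and $\tau_n\to\tauE$ uniformly away from $\partial E$), computes $\partial_{x_i}$ of the corresponding truncated integrals $h^n$ by ordinary integration by parts, and then passes to the limit uniformly on compact sets. Your route avoids the approximation and works directly with the lens-shaped symmetric differences; this is more geometric and arguably more elementary, while the paper's approximation route is more routine once set up.

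There is, however, a missing ingredient in your proposal that the paper does supply: you never say what ``$\eps$ sufficiently small'' is used for. The paper's key observation is that, since $\partial E$ is a compact $C^{2,\beta}$ hypersurface, for all small $\eps>0$ one has $\cH^{N-1}(\partial B_\eps(x)\cap\partial E)=0$ for every $x\in\R^N$. Without this, your lens-region step does not go through: the limit $\frac{1}{h}\int_{B_h\cup C_h}\tauE(x+z)\,|z|^{-(N+\alpha)}\,dz$ need not equal the surface integral when $\tauE(x+\cdot)$ jumps on a set of positive $\cH^{N-1}$ measure on $\partial B_\eps$, because thickening the sphere then samples both values of $\tauE$ in a way that does not converge. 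The same condition is what makes the surface term $\int_{|z|=\eps}\tauE(x+z)z_i\,d\sigma(z)$ well defined and continuous in $x$ (dominated convergence alone does not give this, since $\tauE$ is discontinuous across $\partial E$ and the surface measure could charge that set). Once you insert this transversality condition, your argument closes; but as written, the step you yourself flag as ``the main obstacle'' is acknowledged rather than resolved.
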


Note that the boundary integral in (\ref{eq:deriv-H_eps}) vanishes if 
$\partial B_\eps(x) \subset E$ or $\partial B_\eps(x) \subset E^c$.

\begin{proof}
Since $E \subset \R^N$ is bounded and $\partial E$ is of class $C^{2,\beta}$, we have, for $\eps>0$ sufficiently small,
\begin{equation}
  \label{eq:hausdorff-intersection}
  \cH^{N-1}(\partial B_\eps(x) \cap \partial E) = 0 \qquad \text{for all $x \in \R^N$}
\end{equation}
---that we will assume from now on--- and also that the boundary integral in (\ref{eq:deriv-H_eps}) is continuous in $x$.

Let $\tau_n \in C^1_c( \R^N)$, $n \in \N$, be chosen such that 
\begin{equation}
  \label{eq:bound-tau-n}
|\tau_n| \le 1 \quad \text{in $\R^N$ for all $n \in \N$}  
\end{equation}
 and such that 
\begin{equation}
  \label{eq:tau-n-uniform}
\tau_n \to \tauE \qquad \text{uniformly on compact subsets of $\R^N \setminus \partial E$.}
\end{equation}
Moreover, for $n \in \N$, we consider 
$$
h^n :\R^N  \to \R, \quad h^n(x)= \int_{|z| \ge \eps} \tau_n(x+z) |z|^{-(N+\alpha)}\,dz. 
$$
By a standard application of Lebesgue's theorem, $h^n$ is of class $C^1$, and by integration by parts we have 
\begin{align*}
\frac{\partial h^n}{\partial x_i} (x)&= 
\int_{|z| \ge \eps} \frac{\partial \tau_n}{\partial x_i} (x+z) \, |z|^{-(N+\alpha)}\,dz\\
&=  (N+\alpha) 
\int_{|z| \ge \eps}   \tau_n(x+z)    |z|^{-(N+2+\alpha)} \, z_i \,dz
- \eps^{-(N+1+\alpha)} \int_{|z| = \eps}  \tau_n(x+z)  z_i  \,d\sigma(z).
\end{align*}

Let $R>0$. From (\ref{eq:bound-tau-n}) and (\ref{eq:tau-n-uniform}), it easily follows that 
$$
\sup_{|x| \le R} \int_{\eps \le |z| \le \rho} |\tau_n(x+z)-\tau_{E}(x+z)| d z\to 0  \qquad \text{as $n \to \infty$ for every $\rho>\eps$,}
$$
and by using (\ref{eq:bound-tau-n}) again this implies that   
\begin{equation}
  \label{eq:uniform-first-integral}
\sup_{|x| \le R} \Bigl| \int_{|z| \ge \eps} \bigl(\tau_n(x+z)-\tau_{E}(x+z)\bigr)     |z|^{-(N+2+\alpha)} \, z_i \,dz \Bigr| \to 0 \qquad 
\text{as $n \to \infty$}
\end{equation}
for $i=1,\dots,N$. A similar argument, using (\ref{eq:hausdorff-intersection}), (\ref{eq:bound-tau-n}) and (\ref{eq:tau-n-uniform}), shows that 
\begin{equation}
  \label{eq:uniform-second-integral}
\sup_{|x| \le R}  \Bigl|  \int_{|z| = \eps}  \bigl(\tau_n(x+z)-\tau_{E}(x+z)\bigr)  z_i  \,dz\Bigr| \to 0 \qquad \text{as $n \to \infty$}
\end{equation}
for $i=1,\dots,N$. Since also $h^n \to - H_\eps$ as $n \to \infty$ uniformly in $B_R(0)$, it follows that $H_\eps$ is of class $C^1$ in $B_R(0)$ with partial derivatives given by (\ref{eq:deriv-H_eps}). Since $R>0$ was arbitrary, the claim follows.
\end{proof}

In the following, we set
$$
A(\eps',\eps):= \{z \in \R^N\::\: \eps' < |z| < \eps\}\qquad \text{for $0< \eps' \le \eps$.}
$$

\begin{lemma}
\label{sec:sketch-argument-lemma-2}  
There exist $\eps_0>0$ and $C>0$ with the following property. For every $x \in \partial E$, $v \in T_x \partial E$ and $0< \eps' < \eps < \eps_0$ we have
\begin{equation}
  \label{eq:rel-est-A_eps_eps}
\Bigl|
\int_{A(\eps',\eps)} \tauE(x+z)   |z|^{-(N+2+\alpha)}\, z \cdot  v \,dz \Bigr| \le 
C|v|  \eps^{\beta-\alpha}
\end{equation}
and 
\begin{equation}
  \label{eq:rel-est-S_eps}
\Bigl|
\int_{|z|= \eps} \tauE(x+z)  z \cdot  v \,d\sigma(z) \Bigr| \le
C|v| \eps^{N+1+\beta}. 
\end{equation}
\end{lemma}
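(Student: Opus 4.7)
The plan is to exploit a reflection symmetry of a second-order approximation of $\partial E$ at $x$, the tangentiality of $v$ playing the crucial role. I would fix $x \in \partial E$ and work in local orthonormal coordinates centered at $x$ with $\nu(x)=e_N$, so that $T_x\partial E = \{z_N=0\}$, the tangential vector $v$ has $v_N=0$, and on some small ball $B_{\eps_0}(0)$ the set $E$ is described as $\{(z',z_N) : z_N < \varphi(z')\}$ for some $\varphi \in C^{2,\beta}$ with $\varphi(0)=0$ and $\nabla\varphi(0)=0$. A second-order Taylor expansion at $0$ gives $\varphi(z') = Q(z') + R(z')$, where $Q(z') := \tfrac12 D^2\varphi(0)[z',z']$ is an \emph{even} quadratic form and $|R(z')| \le C|z'|^{2+\beta}$. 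I would then introduce the comparison function
$$
\widetilde\tau(z) := -\operatorname{sign}(z_N - Q(z')), \qquad z \in \R^N.
$$
Inside $B_{\eps_0}(0)$, $\widetilde\tau(z)$ can only disagree with $\tau_E(x+z)$ when $z_N$ lies between $\varphi(z')$ and $Q(z')$, and hence
$$
|\tau_E(x+z) - \widetilde\tau(z)| \le 2\cdot \mathbf{1}_{\{|z_N - Q(z')|\le C|z'|^{2+\beta}\}}(z) \qquad \text{for } |z|<\eps_0.
$$

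The key symmetry is the reflection $\sigma : (z',z_N)\mapsto (-z',z_N)$, which preserves $|z|$ (hence also $A(\eps',\eps)$, the sphere $\{|z|=\eps\}$, and the kernel $|z|^{-(N+2+\alpha)}$) and preserves $\widetilde\tau$ (because $Q$ is even), but reverses the sign of $z\cdot v = z'\cdot v$ (because $v_N=0$). Therefore
$$
\int_{A(\eps',\eps)} \widetilde\tau(z)(z\cdot v)|z|^{-(N+2+\alpha)}\,dz = 0 \quad\text{and}\quad \int_{|z|=\eps} \widetilde\tau(z)(z\cdot v)\,d\sigma(z) = 0,
$$
so each integral in the lemma reduces to its remainder in which $\tau_E(x+z)$ is replaced by the difference $\tau_E(x+z) - \widetilde\tau(z)$, supported in the thin set $\{|z_N - Q(z')|\le C|z'|^{2+\beta}\}$.

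For \eqref{eq:rel-est-A_eps_eps}, on this support $|z_N| \le C|z'|^2$, so $|z'|\le |z|\le \sqrt 2\,|z'|$ for small $\eps$; bounding $|z\cdot v|\le |v||z|\le C|v||z'|$ and using Fubini in $(z',z_N)$ gives
$$
\int \cdots dz \;\le\; C|v| \int_{|z'|\le\eps} |z'|^{-(N+1+\alpha)} \cdot |z'|^{2+\beta}\,dz' \;=\; C'|v|\int_0^\eps r^{\beta-\alpha-1}\,dr \;\le\; C''|v|\,\eps^{\beta-\alpha},
$$
the integral being finite because $\beta>\alpha$. For \eqref{eq:rel-est-S_eps}, after the rescaling $z=\eps\omega$ with $\omega\in S^{N-1}$ the support condition becomes $|\omega_N - \eps Q(\omega')|\le C\eps^{1+\beta}$, a band on $S^{N-1}$ of $\mathcal H^{N-1}$-measure $O(\eps^{1+\beta})$; the corresponding surface area on $\{|z|=\eps\}$ is $O(\eps^{N+\beta})$, and multiplying by $|z\cdot v|\le \eps|v|$ yields the claimed bound $C|v|\,\eps^{N+1+\beta}$.

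The main obstacle is the choice of the comparison function: replacing $\partial E$ only by its tangent hyperplane (that is, setting $Q\equiv 0$) would leave a remainder supported in a strip of thickness merely $\sim |z'|^2$, yielding a \emph{divergent} integral against the singular kernel $|z|^{-(N+1+\alpha)}$. The quadratic correction $Q$---available because the full $C^{2,\beta}$ regularity of $\partial E$ provides the H\"older estimate $|R(z')|\le C|z'|^{2+\beta}$---is precisely what produces the extra factor $|z'|^{\beta}$ that converts this divergence into the integrable power $\eps^{\beta-\alpha}$, and analogously supplies the missing $\eps^{\beta}$ in the spherical estimate.
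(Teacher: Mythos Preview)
Your proposal is correct and follows essentially the same approach as the paper: local graph coordinates, the quadratic approximation $Q$ from the $C^{2,\beta}$ Taylor expansion, the reflection $(z',z_N)\mapsto(-z',z_N)$ (which preserves $Q$ and the domains but flips $z\cdot v$), and then a Fubini estimate on the thin remainder strip of thickness $\sim|z'|^{2+\beta}$. The only cosmetic difference is that the paper carries out the cancellation by explicitly splitting $A(\eps',\eps)$ and $\partial B_\eps$ into three pieces $A^\pm,A^0$ (resp.\ $S^\pm,S^0$), whereas you encode the same cancellation via the comparison function $\widetilde\tau(z)=-\operatorname{sign}(z_N-Q(z'))$ and the difference $\tau_E-\widetilde\tau$; the two are equivalent.
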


\begin{proof}
Without loss of generality, we may assume that $x=0$, $v=e_1$ and $\nu(0)=e_N$. 
For $\eps>0$, we consider the sets 
$$
B_\eps^{N-1}:= \{y \in \R^{N-1}\::\:|y| < \eps\} \qquad \text{and} 
\qquad C_\eps:= B_\eps^{N-1} \times (-\eps,\eps) \subset \R^N,
$$
so that we have the inclusions 
$$
A(\eps',\eps) \subset B_\eps(0) \subset C_\eps \qquad \text{for $0< \eps' \le \eps$.}
$$
Since $\partial E$ is of class $C^{2,\beta}$, there exists $\eps_0 \in (0,1]$ and a $C^{2,\beta}$-function $h : B_{\eps_0}^{N-1} \to \R$ with $h(0)=0$, $\nabla h(0)=0$ and such that 
\begin{align*}
&C_{\eps_0} \cap \partial E = \{(y,h(y))\::\: y \in B_{\eps_0}^{N-1}\};\\
&C_{\eps_0} \cap E = \{(y,t) \::\: y \in B_{\eps_0}^{N-1},\; t < h(y)\};\\
&C_{\eps_0} \cap E^c = \{(y,t) \::\: y \in B_{\eps_0}^{N-1},\; t \geq h(y)\}.
\end{align*}
Moreover, making $\eps_0$ smaller if necessary, we find $c>0$ such that 
\begin{equation*}
\Bigl|h (y)-q(y)\Bigr| \le c |y|^{2+\beta} \qquad \text{for $y \in B_{\eps_0}^{N-1}$}\quad \text{with $q(y):= \sum \limits_{i,j=1}^N \partial_{ij}h(0)y_i y_j.$}  
\end{equation*}

Next, for $0< \eps' \le \eps\le \eps_0$, we split $A(\eps',\eps)$ into the subsets
\begin{align*}
A^+(\eps',\eps)&:= \{(y,t) \in A(\eps',\eps)\,:\, t > q(y)+c|y|^{2+\beta}\} \;\subset \;E^c,\\ 
A^-(\eps',\eps)&:= \{(y,t) \in A(\eps',\eps)\,:\, t < q(y)-c|y|^{2+\beta}\} \;\subset \;E,\\ 
A^0(\eps',\eps)&:= \{(y,t) \in A(\eps',\eps)\,:\, q(y)-c|y|^{2+\beta} \le t \le q(y)+c|y|^{2+\beta}\}.
\end{align*}
Since the sets $A^\pm(\eps',\eps)$  are invariant under the reflection $(y,t) \mapsto (-y,t)$ and
$\mp \tauE \equiv 1$ in $A^\pm(\eps',\eps)$, we find that 
$$
\int_{A^\pm(\eps',\eps)}\tauE (z)  |z|^{-(N+2+\alpha)}z_1 \,dz= \mp \int_{A^\pm(\eps',\eps)} |z|^{-(N+2+\alpha)}z_1 \,dz = 0.
$$
We thus have that 
\begin{align*}
\Bigl|
\int_{A(\eps',\eps)}& \tauE(z)|z|^{-(N+2+\alpha)}z_1 \,dz\Bigr| \le 
\int_{A^0(\eps',\eps)} |z|^{-(N+1+\alpha)} \,dz\\
&\hspace{-.5cm} \le \int_{B_{\eps}^{N-1}} \int_{q(y)-c|y|^{2+\beta}}^{q(y)+c |y|^{2+\beta}} |(y,t)|^{-(N+1+\alpha)}  \,dt  \,dy \le \int_{B_{\eps}^{N-1}} |y|^{-(N+1+\alpha)} \int_{q(y)-c|y|^{2+\beta}}^{q(y)+c |y|^{2+\beta}}   \,dt \,dy\\
&\hspace{-.5cm} =2c \int_{B_{\eps}^{N-1}}|y|^{-N+1+\beta-\alpha} dy
= 2c  \:\omega_{\text{\tiny $N\!\!-\!\!2$}} \int_0^{\eps} r^{\beta-\alpha-1}dr =  \frac{2c\, \omega_{\text{\tiny $N\!\!-\!\!2$}}}{\beta-\alpha}  \eps^{\beta-\alpha},
\end{align*}
where $\omega_{\text{\tiny $N\!\!-\!\!2$}}$ denotes the surface area of the unit sphere in $\R^{N-1}$.

To see (\ref{eq:rel-est-S_eps}), we split $\partial B_\eps(0) \subset \R^N$ into the subsets
\begin{align*}
S^+(\eps)&:= \{(y,t) \in \partial B_\eps(0) \,:\, t > q(y)+c|y|^{2+\beta}\} \;\subset \;E^c,\\ 
S^-(\eps)&:= \{(y,t) \in \partial B_\eps(0) \,:\, t < q(y)-c|y|^{2+\beta}\} \;\subset \;E,\\ 
S^0(\eps)&:= \{(y,t) \in \partial B_\eps(0) \,:\, q(y)-c|y|^{2+\beta} \le t \le q(y)+c|y|^{2+\beta}\}.
\end{align*}
Since the sets $S^\pm(\eps)$  are invariant under the reflection $(y,t) \mapsto (-y,t)$ and
$\mp \tauE \equiv 1$ on $S^\pm(\eps)$, we find that 
$$
\int_{S^\pm(\eps)}\tauE (z)  z_1 \,d\sigma(z)= \mp \int_{S^\pm(\eps)} z_1 \,d \sigma(z) = 0
$$
and therefore 
\begin{equation}
  \label{eq:S-estimate}
\Bigl| \int_{\partial B_\eps(0)} \tauE (z)  z_1 \,d\sigma(z) \Bigr| \le 
\int_{S^0(\eps)} |z| \,d\sigma(z) = \eps \cH^{N-1}(S^{0}(\eps)) = \eps^N \cH^{N-1}(S^1(\eps))
\end{equation}
with 
$$
S^1(\eps):= \{(y,t) \in \partial B_1(0) \,:\, \eps q(y)-c\eps^{1+\beta} |y|^{2+\beta} \le t \le \eps q(y)+c \eps^{1+\beta} |y|^{2+\beta}\}.
$$ 

To estimate $\cH^{N-1}(S^1(\eps))$, we fix $\bar q \ge  \max_{\partial B_1(0)}|q|$ and recall that $\eps \le \eps_0 \le 1$. Thus for $(y,t) \in S^1(\eps)$ we have $|t| \le \eps (\bar q + c)$,  and hence 
$$
\bigl|q(\frac{y}{|y|})-q(y)\bigr|= \bigl|q(\frac{y}{|y|})\bigr| (1- |y|^2)=  \bigl|q(\frac{y}{|y|})\bigr| t^2 \le \eps^2 \bar q (\bar q+ c)^2
 = c_1 \eps^2 
$$
with $c_1 = \bar q (\bar q+ c)^2$ and therefore, with $c_2:= c_1 +c$, 
\begin{align*}
\eps q(\frac{y}{|y|})- c_2 \eps^{1+\beta}  \le \eps q(\frac{y}{|y|}) - c_1 \eps^3 - c \eps^{1+\beta} \le  \eps q(y)-c\eps^{1+\beta} \le t &\le \dots\\
&\le \eps q(\frac{y}{|y|})+ c_2 \eps^{1+\beta}.
\end{align*}
Denoting by $S^{N-2}$ the unit sphere in $\R^{N-1}$, we may thus estimate  
$$
\cH^{N-1}(S^1(\eps)) \le  \int_{S^{N-2}} \int_{\eps q(\rho)- c_2 \eps^{1+\beta}}^{\eps q(\rho)+ c_2 \eps^{1+\beta}} (1-t^2)^{\frac{N-3}{2}}\,dt \,d\rho.
$$
If $N \ge 3$, this implies that  
$$
\cH^{N-1}(S^1(\eps)) \le \int_{S^{N-2}} 2c_2 \eps^{1+\beta} \,d\rho =  2 c_2\, \omega_{\text{\tiny $N\!\!-\!\!2$}}\eps^{1+\beta},
$$
whereas in the case $N=2$ we have 
$$
(1-t^2)^{\frac{N-3}{2}} \le (1- \eps^2 (\bar q + c)^2)^{-\frac{1}{2}} \le 2 
\qquad \text{for $(y,t) \in S^1(\eps)$ if $\eps \le \frac{\sqrt{3}}{2(\bar q + c)}$.}
$$
and therefore 
$$
\cH^{N-1}(S^1(\eps)) \le \int_{S^{N-2}} 4c_2 \eps^{1+\beta} \,d\rho =  4 c_2\, \omega_{\text{\tiny $N\!\!-\!\!2$}}\eps^{1+\beta} \qquad \text{if $\eps \le \frac{\sqrt{3}}{2(\bar q + c)}.$}
$$

Combining this with (\ref{eq:S-estimate}) and assuming without loss that $\eps_0 \le \frac{\sqrt{3}}{2(\bar q + c)}$, we find in both cases that  
\begin{equation*}
\Bigl| \int_{\partial B_\eps(0)} \tauE (z)  z_1 \,d\sigma(z) \Bigr| \le 4 c_2\, \omega_{\text{\tiny $N\!\!-\!\!2$}}\, \eps^{N+1+\beta} \qquad \text{for $0 <\eps \le \eps_0$.}
\end{equation*}
Since $\partial E$ is compact and of class $C^{2+\beta}$,  the constants $\eps_0, c, \bar q$ can be chosen independently of $x \in \partial E$, and then also $c_1,c_2>0$ do not depend on $x \in \partial E$.  Hence both (\ref{eq:rel-est-A_eps_eps}) and (\ref{eq:rel-est-S_eps}) hold with $C:=  \omega_{\text{\tiny $N\!\!-\!\!2$}} \max \{\frac{2c}{\beta-\alpha}, 4 c_2 \}$.
\end{proof}

\begin{proof}[Proof of Proposition~\ref{sec:introduction-1} (completed)]
Let $(\eps_k)_k$ be a decreasing sequence of positive numbers with $\eps_k \to 0$. For $k \in \N$, we consider the functions
$$
H_E^k : \partial E \to \R, \qquad H_E^k(x)= - \int_{|z| \ge \eps_k } \tauE(x+z)|z|^{-(N+\alpha)}\,dy. 
$$
We then have that  
$$
\lim_{k \to \infty} H_E^k(x) = - \lim_{\eps_{k} \to 0}\int_{|x-y| \ge \eps_k } \tauE(y)|x-y|^{-(N+\alpha)}\,dy = H_E(x) \qquad \text{for $x \in \partial E$.}
$$
Moreover, we may pass to a subsequence such that all functions $H_E^k$ are of class $C^1$ on $\partial E$ by Lemma~\ref{sec:append-proof-lemma-epsilon}. Moreover,  for a given $C^1$-vector field $v$ on $\partial E$ and $k \in \N$, we have $\partial_{v(x)} H_E^k(x)  = \rho^k(x) + \sigma^k(x)$ for $x \in \partial E$ with $\rho^k,\sigma^k : \partial E \to \R$ given by 
$$
\rho^k(x):=
-(N+\alpha) \int_{|z| \ge \eps_{k}} \tauE(x+z)|z|^{-(N+2+\alpha)}z \cdot v(x)   \,dz
$$
and
$$
\sigma^k(x):=
\eps_{k}^{-(N+1+\alpha)} \int_{|z|=\eps_{k}} \tauE(x+z) z \cdot v(x) d\sigma(x).
$$

Since $\beta>\alpha$, Lemma~\ref{sec:sketch-argument-lemma-2} implies that $(\rho^k)_k$, $k \in \N$ is a Cauchy sequence in $C(\partial E)$, whereas $\sigma^k \to 0$ uniformly on $\partial E$ as $k \to \infty$. Moreover, a standard sequence mixing argument shows that, independently of the choice of the sequence $(\eps_k)_k$, we have 
$$
\lim_{k \to \infty} \rho^k(x) = -(N+\alpha) \lim_{\eps \to 0} 
\int_{|z| \ge \eps} \tauE(x+z)|z|^{-(N+2+\alpha)}z \cdot v(x)   \,dz
\qquad \text{for $x \in \partial E$.} 
 $$
Since the vector field $v$ was chosen arbitrarily, it thus follows that $H_E$ is of class $C^1$ on $\partial E$ with 
\begin{align*}
\partial_{v(x)}  H_E(x) &= \lim_{k \to \infty} \rho^k(x) =  -(N+\alpha) \lim_{\eps \to 0} 
\int_{|z| \ge \eps} \tauE(x+z)|z|^{-(N+2+\alpha)}z \cdot v(x)   \,dz\\
&=  (N+\alpha) \lim_{\eps \to 0}  \int_{|x-y| \ge \eps} \tauE(y)|x-y|^{-(N+2+\alpha)}(x-y) \cdot v(x)   \,dz,
\end{align*}
as claimed.
\end{proof}

\section{Periodic CNMC curves in $\R^{2}$}

In this section we set up the Lyapunov-Schmidt procedure to prove Theorem~\ref{res:cyl1}
on Delaunay-type curves with CNMC in the plane, that is, curves with constant nonlocal
mean curvature. The full proof of the theorem will be completed 
in this and the following two sections.

The following easy lemma gives a formula for the nonlocal mean curvature of a set given by
$\{-u(s_1)<s_2<u(s_1)\}$ in terms of the positive function $u=u(s)$. The same computation already
appears in \cite{Barrios,Davila2014B}.

\begin{lemma}
\label{res:cyl2}
If $E=\{(s_1,s_2)\in\R^2 : -u(s_1)<s_2<u(s_1)\}$, where $u:\R\to(0,\infty)$ is a function of class 
$C^{1,\beta}$ for some $\beta\in(\alpha,1)$ such that $0<m_1\leq u\leq m_2$ for two positive constants $m_i$, 
its nonlocal mean curvature $H_E$ ---that we will denote by $H(u)$---
at a point $(s,u(s))$ is given by
\begin{equation}\label{graphNMC}
\begin{split}
 \frac{1}{2}H(u)(s)& =\int_\R F\left(\dfrac{u(s)-u(s-t)}{|t|}\right)\ d\mu(t) \\
   & \hspace{1cm}-\int_\R\left\{F\left(\dfrac{u(s)+u(s-t)}{|t|}\right)-F(+\infty)\right\}\ d\mu(t),
\end{split}
\end{equation}
where the integrals are to be understood in the principal value sense,
\begin{equation}\label{measmu}
d\mu(t)=\frac{dt}{|t|^{1+\alpha}},
\end{equation}
and
\begin{equation}\label{funF}
  F(q)=\int_0^q\dfrac{d\tau}{(1+\tau^2)^{\frac{2+\alpha}{2}}}.
\end{equation}
\end{lemma}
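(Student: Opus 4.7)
The approach is a direct computation from the definition \eqref{eq:def-frac-curvature}. Setting $x = (s, u(s))$ and parameterizing the ambient variable as $y = (s-t, y_2)$, the NMC becomes
\begin{equation*}
H(u)(s) = -\mathrm{PV}\!\int_\R\!\int_\R \tauE(s-t, y_2)\bigl[t^2 + (u(s)-y_2)^2\bigr]^{-\frac{2+\alpha}{2}}\, dy_2\, dt.
\end{equation*}
For each fixed $t \neq 0$, the inner integrand is absolutely integrable in $y_2$ (the kernel decays like $|y_2|^{-(2+\alpha)}$ and $\tauE$ is bounded), so the first step is to evaluate the inner integral and address the outer principal value afterwards.

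In the inner integral I substitute $v = u(s) - y_2$. The locus where $\tauE = +1$ becomes $v \in (a,b)$ with $a := u(s) - u(s-t)$ and $b := u(s) + u(s-t)$, while $\tauE = -1$ on the complement. Writing $g(v) := (t^2+v^2)^{-(2+\alpha)/2}$ and using the elementary identity
\begin{equation*}
\int_a^b g - \int_{-\infty}^a g - \int_b^\infty g \;=\; 2\int_a^b g - \int_\R g,
\end{equation*}
the inner integral equals $2\int_a^b g(v)\, dv - \int_\R g(v)\, dv$. Rescaling $v = |t|\tau$ --- and using that $F$ is odd with $\int_\R (1+\tau^2)^{-(2+\alpha)/2}\, d\tau = 2F(+\infty)$ --- converts this into
\begin{equation*}
2|t|^{-(1+\alpha)}\Bigl[F(b/|t|) - F(a/|t|) - F(+\infty)\Bigr].
\end{equation*}
Substituting back into the outer integral, distributing the minus sign, and splitting into the $F(a/|t|)$ piece and the $F(b/|t|) - F(+\infty)$ piece yields exactly \eqref{graphNMC}.

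The only delicate point is that the outer integral is a principal value, so I must verify that the split into the two pieces of \eqref{graphNMC} is legitimate and that each is individually integrable. The $C^{1,\beta}$ regularity with $\beta > \alpha$ gives $(u(s)-u(s-t))/|t| = u'(s)\,\mathrm{sign}(t) + O(|t|^\beta)$ near $t=0$, so the first integrand of \eqref{graphNMC} has the form $F(u'(s))\,\mathrm{sign}(t) + O(|t|^\beta)$; the PV kills the odd leading part and the remainder contributes $O(|t|^{\beta-1-\alpha})$, integrable since $\beta > \alpha$. For the second piece, $(u(s)+u(s-t))/|t| \to +\infty$ as $t\to 0$ forces $F(\cdot) - F(+\infty) = O(|t|^{1+\alpha})$, absolutely integrable near zero; boundedness $m_1 \le u \le m_2$ together with $F(q) \sim q$ for small $q$ handles $|t|\to\infty$. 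The main bookkeeping obstacle is precisely this splitting: I would carry it out by first truncating to $\{|t|\ge \eps\}$, applying Fubini on that set (where everything is absolutely convergent), performing the algebra above, and then passing to the limit $\eps \to 0$ separately in each of the two pieces.
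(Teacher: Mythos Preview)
Your argument is correct and follows essentially the same route as the paper: compute the inner vertical integral via the identity $\int_a^b - \int_{-\infty}^a - \int_b^\infty = 2\int_a^b - \int_\R$, rescale by $|t|$ to produce $F$, and change horizontal variable to $t$. The only cosmetic difference is that you perform the shift $s_1 = s-t$ at the outset rather than at the end; your final paragraph on the integrability of each piece (using $C^{1,\beta}$ regularity for the first integral and the tail behaviour of $F$ for the second) is actually more careful than the paper, which simply carries out the formal computation.
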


In the fractional perimeter functional (whose Euler-Lagrange equation
is the NMC operator), the first integral in \eqref{graphNMC} 
corresponds to the interactions of points in the upper curve
$\{s_2=u(s_1)\}$ of $\{-u(s_1)<s_2<u(s_1)\}$ with points in the same upper curve. 
The second integral corresponds to interactions of points in the upper curve with points in the lower disjoint
curve $\{s_2=-u(s_1)\}$.

\begin{proof}[Proof of Lemma \ref{res:cyl2}]
We have that 
$$
-H(u)(s)=\int_{\R^2}\dfrac{1_E((s_1,s_2))-1_{E^c}((s_1,s_2))}{|(s,u(s))-(s_1,s_2)|^{2+\alpha}}\ ds_1\, ds_2
=\int_\R I(s,s_1)\,ds_1,
$$
where $1_A$ denotes the characteristic function of $A$, and
\begin{eqnarray*}
I(s,s_1)&=&\left\{\int_{-u(s_1)}^{u(s_1)} ds_2-\int_{-\infty}^{-u(s_1)} ds_2-\int_{u(s_1)}^{+\infty} ds_2\right\}
\left((s-s_1)^2+(s_2-u(s))^2\right)^{-\frac{2+\alpha}{2}}\\
&=&\left\{2\int_{-u(s_1)}^{u(s_1)}\, ds_2-\int_\R\, ds_2\right\}|s-s_1|^{-(2+\alpha)}
\left(1+\left(\dfrac{s_2-u(s)}{|s-s_1|}\right)^2\right)^{-\frac{2+\alpha}{2}}.
\end{eqnarray*}
Then, with the new variable $\tau=(s_2-u(s))/|s-s_1|$, one gets 
$$
\hspace{-3.7cm}
I(s,s_1)=|s-s_1|^{-(1+\alpha)}\left(2\int_{\frac{-u(s_1)-u(s)}{|s-s_1|}}^{\frac{u(s_1)-u(s)}{|s-s_1|}}
\ d\tau-\int_\R\ d\tau\right)(1+\tau^2)^{-\frac{2+\alpha}{2}}$$
$$
\hspace{.1cm}
=|s-s_1|^{-(1+\alpha)}\left\{2\left(F\left(\frac{u(s_1)-u(s)}{|s-s_1|}\right)-F\left(\frac{-u(s_1)-u(s)}{|s-s_1|}\right)\right)-F(+\infty)+F(-\infty)\right\}$$
$$
\hspace{-2.05cm}
=-2|s-s_1|^{-(1+\alpha)}\left\{F\left(\frac{u(s)-u(s_1)}{|s-s_1|}\right)-
F\left(\frac{u(s)+u(s_1)}{|s-s_1|}\right)+F(+\infty)\right\}.
$$
 Changing the variable $s_1=s-t$, we arrive at the expression of the lemma.
\end{proof}

With the use of formula \eqref{graphNMC} we can compute the NMC ---that we denote by $h_R$---
of the straight band of width $u_R\equiv 2R$:
\begin{equation}\label{NMCR}
H(u_R)=-2\int_\R\left\{F\left(\frac{2R}{|t|}\right)-F(+\infty)\right\} d\mu(t)=:h_R>0.
\end{equation}

Note that the functions $u_a=u_a(s)$ in \eqref{form},
$$
 u_{a}(s)=R+\frac{a}{\lambda}\{\cos\left(\lambda s\right)+v_{a}(\lambda s)\}, 
 \quad\text{ with } \lambda=\lambda(a),
$$
have a period that may change with $a$. It will be very convenient, in order to apply the implicit function theorem, 
to work with functions all with the same period $2\pi$. Thus, we rescale the variables $s=s_1$ and $u(s)=s_2$
and see how the NMC changes after rescaling. Since the NMC is a geometric quantity, the following comes 
as no surprise. 
If, for $\lambda>0$, $w^\lambda(s):=\lambda w(s/\lambda)$, then with the change of variables $\tau=t/\lambda$ we see that
\begin{eqnarray*}
&& \hspace{-1.5cm}  H(w^\lambda)(s)= 2\int_\R F\left(\dfrac{\lambda w(s/\lambda)-\lambda w((s-t)/\lambda)}{|t|}\right)\ d\mu(t) \\
 &-& 2\int_\R\left\{F\left(\dfrac{\lambda w(s/\lambda)+\lambda w((s-t)/\lambda)}{|t|}\right)-F(+\infty)\right\}\ d\mu(t)=
   \lambda^{-\alpha} H(w)(s/\lambda).
\end{eqnarray*}
Thus, we must look for functions $u=(u_a)^\lambda$ of the form
\begin{equation}\label{express}
  u(s)=\lambda R+a\{\cos(s)+v_{a}(s)\},
\end{equation}
with $v_a$ even, $2\pi$-periodic, and orthogonal to $\cos(\cdot)$ in $[0,\pi]$. In addition, since we want the NMC for
the band given by $u_a$ to be $h_R$, we need that $H(u)=\lambda^{-\alpha} h_R$. Thus, from \eqref{NMCR}
and making below the change of variables $ t= \overline t/\lambda$, we see that $u$ must
satisfy
\begin{eqnarray*}
 \frac{1}{2} H(u) &=&  \frac{1}{2} \lambda^{-\alpha} h_R = - \lambda^{-\alpha}
 \int_\R\left\{F(2R/|t|)-F(+\infty)\right\} d\mu(t) \\
 &= & \lambda^{-\alpha}
 \int_\R   \int_{2R/|t|}^{+\infty}  (1+\tau^2)^{-\frac{2+\alpha}{2}} \, d\tau\, \frac{dt}{|t|^{1+\alpha}} 
 = \int_\R   \int_{2\lambda R/|\overline t|}^{+\infty}  (1+\tau^2)^{-\frac{2+\alpha}{2}} \, d\tau\, 
 \frac{d\overline t}{|\overline t|^{1+\alpha}} \\
 &=& \int_\R   d\mu(\overline t)  \int_{2\lambda R/|\overline t|}^{+\infty}  (1+\tau^2)^{-\frac{2+\alpha}{2}} 
 \, d\tau = \int_\R   d\mu(t)  \int_{2\lambda R/|t|}^{+\infty}  (1+\tau^2)^{-\frac{2+\alpha}{2}} 
 \, d\tau\\
&=& -   \int_\R\left\{F\left(2\lambda R/|t|\right)-F(+\infty)\right\} d\mu(t).
\end{eqnarray*}
Finally, recalling the expression \eqref{graphNMC} for $H(u)$, we conclude that the function $u$ given by 
\eqref{express} must satisfy the equation
\begin{equation}\label{maineq1}
\int_\R \left\{F\left(\dfrac{u(s)-u(s-t)}{|t|}\right)-\left\{F\left(\dfrac{u(s)+u(s-t)}{|t|}\right)-
F\left(\dfrac{2\lambda R}{|t|}\right)\right\}\right\} d\mu(t) =0.
\end{equation}

Recall that we look for solutions of this equation of the form
\begin{equation*}
 u(s)=\lambda R+a\{\cos(s)+v_{a}(s)\} =\lambda R + a \va(s).
\end{equation*}
In our proof, after a Lyapunov-Schmidt reduction, we will apply the implicit function theorem at 
$a=0$, $\lambda=1$, and $v_a=0$ ---taking $v_a$ orthogonal to $\cos(\cdot)$ in $L^2(0,\pi)$. 
This will give us $\lambda$ and $v_a$ as functions of $a$.  
To have the linearized operator to be invertible and be able to use directly the implicit function theorem, 
it is necessary to divide equation \eqref{maineq1} by $a$ (as in \cite{Cabre2015B} and as in the classical paper 
by Crandall-Rabinowitz~\cite{Crandall1971}) and work with the new operator 
\begin{equation}\label{maineq2}
 \begin{split}
& \hspace{-.2cm}  \Phi(a,\lambda,\va)(s):= \int_\R \dfrac{1}{a}F\left(a\ \dfrac{\va(s)-\va(s-t)}{|t|}\right) d\mu(t) \\
  & - \int_\R \dfrac{1}{a}\left\{F\left(\dfrac{2\lambda R+a(\va(s)+\va(s-t))}{|t|}\right)
   -F\left(\dfrac{2\lambda R}{|t|}\right)\right\}d\mu(t).
 \end{split}
\end{equation}
We need to solve the nonlinear equation
$$
 \Phi(a,\lambda,\va)=0.
$$

Let us introduce the functional spaces in which we work.
We take $\beta$ such that
\begin{equation}\label{defbeta}
0<\alpha <\beta <\min\{1,2\alpha+1/2\}.
\end{equation}
The condition $\beta < 2\alpha+1/2$ is technical (to simplify a proof on regularity) and could
be avoided. Consider the spaces
\begin{equation}\label{spX}
 X:=C^{1,\beta}_{p,e}=\{\va:\R\to\R\, :\ \va\in C^{1,\beta}(\R) \text{ is $2\pi$-periodic and even}\}
\end{equation}
and
\begin{equation}\label{spY}
 Y:=C^{0,\beta-\alpha}_{p,e}=\{\tilde\va:\R\to\R\, :\ \tilde\va\in C^{0,\beta-\alpha}(\R) 
 \text{ is $2\pi$-periodic and even}\}.
\end{equation}
Note that if $\va\in X$, then 
$$
\va'(0)=\va'(\pi)=0.
$$
Since the functions in these spaces are even and $2\pi$-periodic, we can take as norms $\Vert\cdot\Vert_X$
and $\Vert\cdot\Vert_Y$, respectively, the $C^{1,\beta}([0,\pi])$ and $C^{0,\beta-\alpha}([0,\pi])$ norms.
That is,
\begin{equation}\label{normX}
\Vert\va\Vert_X:= \Vert\va\Vert_{L^\infty(0,\pi)} + \Vert\va'\Vert_{L^\infty(0,\pi)}
+\sup_{0\leq s<\overline s\leq\pi}
\frac{|\va'(s)-\va'(\overline s)|}{|s-\overline s|^\beta}
\end{equation}
and
$$
\Vert\tilde\va\Vert_Y:= \Vert\tilde\va\Vert_{L^\infty(0,\pi)} +\sup_{0\leq s<\overline s\leq\pi}
\frac{|\tilde\va(s)-\tilde\va(\overline s)|}{|s-\overline s|^{\beta-\alpha}}.
$$

We must study the operator 
$$
\Phi(a,\lambda,\varphi)=\Phi_1 (a,\varphi)-\Phi_2(a,\lambda,\varphi),
$$
where
\begin{equation*}
 \Phi_1 (a,\varphi):=\int_\R \frac{1}{a} F(a\delta_- \varphi)\ d\mu(t),
\end{equation*}
\begin{equation}\label{Phi2}
 \Phi_2(a,\lambda,\varphi):=\int_\R \frac{1}{a} \left\{F\left( \frac{2\lambda R}{|t|}+
 a\frac{\delta_0\varphi}{|t|}\right)-F\left(\frac{2\lambda R}{|t|}\right)\right\} d\mu(t),
\end{equation}
and we define
$$
\delta_- \varphi (s,t):= \frac{\varphi(s)-\varphi(s-t)}{|t|},
$$
\begin{equation}\label{deltaplus}
 \delta_+ \varphi (s,t):= \frac{\varphi(s)-\varphi(s+t)}{|t|}
\end{equation}
($\delta_+$ does not appear above, but it will be used later), and
$$
\delta_0 \varphi (s,t):= \varphi(s)+\varphi(s-t).
$$

The previous operators ---and their equivalent expressions below--- are seen to be well defined 
(some in the principal value sense) in section~6, where we will establish much more: their differentiable
character between the spaces $X$ and $Y$ above.

To express the first term $\Phi_1$ of the operator in a nicer manner, observe that 
$$
 F_1(a,q):=\dfrac{1}{a}F(aq)=\dfrac{1}{a}\int_0^{aq}\dfrac{d\rho}{(1+\rho^2)^{\frac{2+\alpha}{2}}}=
 \int_0^q\dfrac{d\tau}{(1+a^2\tau^2)^{\frac{2+\alpha}{2}}}
$$
is a smooth function both of $a$ and $q$, and also that $F_1(0,q)=q$. 
Thus, we have that 
\begin{equation}\label{Phi10}
 \Phi_1 (a,\varphi):=\int_\R F_1(a,\delta_- \varphi)\ d\mu(t),
\end{equation}
where
\begin{equation}\label{F1}
F_1(a,q):=\int_0^q \frac{d\tau}{(1+a^2\tau^2)^{\frac{2+\alpha}{2}}}.
\end{equation}

Next, we deal with the second term $\Phi_2$ in the operator $\Phi$. Recall
that it is given by expression \eqref{Phi2}.
Using the change of variables $\tau= (2\lambda R+a\delta_0\va\,\overline\tau)/|t|$,
we see that
\begin{eqnarray*}
& &\hspace{-1.5cm}\frac{1}{a} \left\{F\left( (2\lambda R+
 a\delta_0\varphi)/|t|\right)-F\left(2\lambda R/|t|\right)\right\}
 = \frac{1}{a}\int_{2\lambda R/|t|}^{(2\lambda R+a\delta_0\va)/|t|}
 \frac{d\tau}{(1+\tau^2)^{\frac{2+\alpha}{2}}}  \\
 &=& \frac{\delta_0\va}{|t|} \int_0^1 \left\{ 
 1+\left((2\lambda R+a\delta_0\va\, \overline\tau)/|t| \right)^2\right\}^{-\frac{2+\alpha}{2}}
d\overline\tau \\
&=&|t|^{1+\alpha} \delta_0\va \int_0^1 \frac{d\overline\tau}{\left\{ t^2+(2\lambda R+ a\delta_0\va
\,\overline\tau)^2\right\}^{\frac{2+\alpha}{2}}}.
\end{eqnarray*}
Therefore,
\begin{equation}\label{Phi220}
\Phi_2(a,\lambda,\va)=\int_\R \delta_0\va \, F_2(t,a,\lambda,\delta_0\va)\, dt,
\end{equation}
where
\begin{equation}\label{F30}
 F_2(t,a,\lambda,r)= \int_0^1 \frac{d\overline\tau}{\left\{ t^2+(2\lambda R+ ar
\overline\tau)^2\right\}^{\frac{2+\alpha}{2}}}.
\end{equation}
Note that the measure in \eqref{Phi220} is $dt$ and not $d\mu(t)$ as above for $\Phi_1$, and that $\delta_0$ 
is not an increment but a sum which is not divided by $|t|$.

Let us write our operator $\Phi$ when $a=0$. The following expression can be computed from \eqref{maineq2}
differentiating with respect to $a$ at $a=0$, or can be obtained from \eqref{Phi10} and \eqref{Phi220} 
---taking into account that
$(t^2+(2\lambda R)^{2})^{-\frac{2+\alpha}{2}}=F'(2\lambda R/|t|)/|t|^{2+\alpha}$. We have
\begin{equation}\label{azero}
 \hspace{-.17cm} \Phi(0,\lambda,\va)(s)=\int_\R\left(\va(s)-\va(s-t)\right)\dfrac{dt}{|t|^{2+\alpha}}
  -\int_\R\left(\va(s)+\va(s-t)\right)F'\left(\dfrac{2\lambda R}{|t|}\right)\dfrac{dt}{|t|^{2+\alpha}}, 
\end{equation}
where, as always, the integrals are to be understood in the principal value sense.

We choose now $R>0$ such that 
$\Phi(a=0,\lambda=1,\va=\cos(\cdot))=0$ holds, i.e., such that
\begin{equation}\label{defR}
\begin{split}
   0&= \Phi(0,1,\cos(\cdot))(s)=\int_\R\left(\cos(s)-\cos(s-t)\right)\dfrac{dt}{|t|^{2+\alpha}}\\
  &\hspace{3.5cm}-\int_\R\left(\cos(s)+\cos(s-t)\right)F'\left(\dfrac{2\lambda R}{|t|}\right)\dfrac{dt}{|t|^{2+\alpha}}
\end{split}
\end{equation}
for all $s\in\R$. We will see in Lemma \ref{existsR} below that there exists a unique $R>0$, depending only on
$\alpha$, such that \eqref{defR} holds for all $s\in\R$.

The next task is to study the linearized operator of $\Phi$ at $(a=0,\lambda=1,\va=\cos(\cdot))$. This
will be done in all detail in next section. Let us say here that we will see that the function 
$$
\cos(\cdot) \text{ belongs to the kernel of } D_\va\Phi(0,1, \cos(\cdot)), 
$$
and this is why we must use a Lyapunov-Schmidt
reduction corresponding to the following subspaces of $X$ and $Y$.

In $L^2(0,\pi)$ we consider the orthogonal basis given by $\{1,\cos(\cdot),\cos(2\cdot),\cos(3\cdot),\ldots\}$ and
the subspaces
\begin{equation}\label{subsp}
V_1=\langle\cos(\cdot)\rangle \qquad\text{and} \qquad V_2= \langle\cos(\cdot)\rangle^\bot
 =\langle 1,\cos(2\cdot),\cos(3\cdot),\ldots \rangle.
\end{equation}
We also consider
$$
X_1=X\cap V_1, \qquad X_2=X\cap V_2, \qquad Y_1=Y\cap V_1, \qquad Y_2=Y\cap V_2,
$$
meaning for the first intersection, for instance, the functions defined in $\R$ belonging to~$X$ 
such that, when restricted to $(0,\pi)$, belong to $V_1$. Denote  the standard projections by
$$
\Pi_1:Y\to Y_1 \qquad\text{and}\qquad \Pi_2:Y\to Y_2.
$$
Retaking our notation above, we look for functions $\va$ of the form
$$ 
\va(s)=\cos(s)+v(s), \quad\text{ with } v\in X_2.
$$
We then write our operator acting on the function $v\in X_2$ as
$$
\overline\Phi:A\subset\R\times\R\times X_2\to Y, \qquad \overline\Phi(a,\lambda,v):=\Phi (a,\lambda,\cos(\cdot)+v),
$$
where $A$ will be an appropriate open set containing $(0,1,0)$.

The key result to apply the implicit function theorem is the following proposition. 
It will be proved in the next two sections. The differentiability properties stated in the proposition are
proved in section~6 and ensure, in particular, that all operators and integrals above are well defined.
Recall that, by \eqref{defR}, we have
that $\overline\Phi(0,1,0)=0$.

\begin{proposition}\label{propPhi}
There exists $\nu>0$ small enough $($depending only on $\alpha)$ for which the operator 
$\overline\Phi=\overline\Phi (a,\lambda,v): (-\nu,\nu)\times (1/2,3/2)\times B_{1}(0)\subset \R\times\R\times X_2
\to Y$ is of class $C^1$. 
Moreover, the linear operator 
$$
(D_\lambda\overline\Phi,D_v\overline\Phi)(0,1,0):\R\times X_2\to Y
$$
is continuous and invertible.
\end{proposition}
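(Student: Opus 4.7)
My plan is to treat the two assertions separately: the $C^1$ regularity of $\overline\Phi$ on the prescribed neighborhood of $(0,1,0)$, and the invertibility of the combined linearization $(D_\lambda\overline\Phi,D_v\overline\Phi)(0,1,0)$.

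For the $C^1$ character, I would write $\overline\Phi=\Phi_1-\Phi_2$ using the smooth integrands $F_1$ and $F_2$ from \eqref{F1} and \eqref{F30}. The principal-value integrability of $\Phi_1(a,\va)$ near $t=0$ is obtained by symmetrizing $t\mapsto -t$, which produces a combination of $\delta_-\va$ and $\delta_+\va$ bounded by $C\|\va\|_X|t|^{\beta}$ for $\va\in X=C^{1,\beta}_{p,e}$; this is integrable against $d\mu(t)=|t|^{-1-\alpha}dt$ precisely because $\beta>\alpha$. The integrand of $\Phi_2$ is smooth in $t$ and presents no analogous difficulty. Establishing differentiability in $(a,\lambda,\va)$ with image in $Y=C^{0,\beta-\alpha}_{p,e}$, consistent with the $(1+\alpha)$-order of the operator, requires dominated convergence arguments adapted to the principal value; this is the content of section~6 and I would quote it here.

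For the linearization, note that by \eqref{azero} the map $\va\mapsto\Phi(0,1,\va)$ is \emph{linear}, so $D_v\overline\Phi(0,1,0)[v]=\Phi(0,1,v)$ for every $v\in X_2$. A Fourier cosine computation, using the evenness of the kernels in $t$ to kill the sine terms, shows that this operator is diagonal in the basis $\{1,\cos(\cdot),\cos(2\cdot),\ldots\}$, sending $\cos(k\,\cdot)$ to $\sigma_k\cos(k\,\cdot)$ with
$$
\sigma_k=c_\alpha\,k^{1+\alpha}-\mu_k,\qquad \mu_k=\int_\R(1+\cos(kt))\,F'\!\left(\frac{2R}{|t|}\right)\frac{dt}{|t|^{2+\alpha}},
$$
where $c_\alpha=2\int_0^\infty(1-\cos\tau)\,\tau^{-2-\alpha}\,d\tau$ is the symbol of $(-\Delta)^{(1+\alpha)/2}$ acting on a pure cosine. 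The defining equation \eqref{defR} of $R$ is precisely the condition $\sigma_1=0$, which forces the removal of the $\cos(\cdot)$ mode via restriction to $X_2$.

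To conclude that $\Phi(0,1,\cdot)\colon X_2\to Y_2$ is an isomorphism it remains to prove $\sigma_k\neq 0$ for every $k\in\{0,2,3,\ldots\}$, with growth $\sigma_k\sim c_\alpha k^{1+\alpha}$ as $k\to\infty$; the case $k=0$ is trivial since $\sigma_0=-\mu_0<0$, while the case $k\geq 2$ is the core spectral statement, reserved to section~5. Finally, I would differentiate \eqref{azero} in $\lambda$ at $(\lambda,\va)=(1,\cos(\cdot))$ to obtain
$$
D_\lambda\overline\Phi(0,1,0)(s)=-\,2R\cos(s)\int_\R(1+\cos t)\,F''\!\left(\frac{2R}{|t|}\right)\frac{dt}{|t|^{3+\alpha}},
$$
which is a nonzero element of $Y_1=\la\cos(\cdot)\ra$ because $F''<0$ on $(0,\infty)$. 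Since the linearization respects the splitting $Y=Y_1\oplus Y_2$, sending $\R$ isomorphically onto $Y_1$ via $D_\lambda$ and $X_2$ isomorphically onto $Y_2$ via $D_v$, the map $(D_\lambda\overline\Phi,D_v\overline\Phi)(0,1,0)\colon\R\times X_2\to Y$ is a topological isomorphism. The hard parts will be the Hölder $C^1$-estimates of section~6 and the quantitative spectral bound $\inf_{k\neq 1}|\sigma_k|/k^{1+\alpha}>0$ of section~5; once these are in hand, the assembly above is purely algebraic.
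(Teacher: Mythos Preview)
Your proposal is correct and follows essentially the same route as the paper: split $\overline\Phi=\Phi_1-\Phi_2$, symmetrize $\Phi_1$ to exploit the second-order bound $|(\delta_-\va+\delta_+\va)(s,t)|\le C\|\va\|_X|t|^\beta$ (this is exactly the paper's inequality \eqref{indel2}), diagonalize $L=D_\va\Phi(0,1,\cdot)$ on the cosine basis, use the definition of $R$ to make $\sigma_1=0$, and check that $D_\lambda\overline\Phi(0,1,0)$ is a nonzero multiple of $\cos(\cdot)$ so that the block structure relative to $Y=Y_1\oplus Y_2$ reduces invertibility to the invertibility of $L\colon X_2\to Y_2$.

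One point deserves a more careful statement. You write that once the spectral facts $\sigma_k\neq 0$ for $k\neq 1$ and $\sigma_k\sim c_\alpha k^{1+\alpha}$ are established, ``the assembly above is purely algebraic.'' This is true on the Sobolev scale $H^{1+\alpha}_{p,e}\cap V_2\to L^2_{p,e}\cap V_2$, where the Fourier-multiplier picture is complete, but it does not by itself yield invertibility on the H\"older scale $X_2\to Y_2$. The paper closes this gap by a short bootstrap: given $f\in Y_2\subset L^2$, solve $Lw=f$ in $H^{1+\alpha}$ via Fourier, then rewrite the equation as
\[
c_1\,(-\Delta)^{\frac{1+\alpha}{2}}w \;=\; c_2\,w + P_R* w + f,
\]
observe that the right-hand side lies in $C^{0,\beta-\alpha}$ (using Morrey's embedding $H^{1+\alpha}\hookrightarrow C^{0,\beta-\alpha}$, which is where the technical constraint $\beta<2\alpha+\tfrac12$ in \eqref{defbeta} enters), and invoke Silvestre's H\"older regularity for the fractional Laplacian to conclude $w\in C^{1,\beta}=X$. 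You should either reproduce this bootstrap or replace it by an explicit Fredholm argument (noting that $P_R*\cdot$ and multiplication by a constant are compact perturbations of $(-\Delta)^{(1+\alpha)/2}$ on the H\"older scale, so that the eigenvalue nonvanishing gives injectivity and hence bijectivity). Either way, the passage from Fourier symbols to H\"older isomorphism needs one more sentence than ``purely algebraic.''
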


From this result and the implicit function theorem, Theorem \ref{res:cyl1} follows immediately.

\begin{proof}[Proof of Theorem~\ref{res:cyl1}]
As mentioned above, we have $\overline\Phi(0,1,0)=0$. Proposition~\ref{propPhi} allows to apply the 
implicit function theorem and obtain the family of functions $u_a$ in the statement of the theorem. 
All properties stated in the theorem follow immediately from the setting described in this section,
with the exception of the following two statements that need to be justified.

It remains to prove that the minimal period of $u_a$ is
$2\pi/\lambda(a)$ if $a\not=0$, and that $u_a\not\equiv u_{a'}$ if $a\not=a'$. Let us start from the first statement.
Clearly, it is equivalent to prove that, after the rescaling, the function 
$$
u(s)=\lambda R+ a\{\cos(s)+v_a(s)\},
$$
with $a\not=0$ and $v_a$ orthogonal to $\cos(\cdot)$ in $L^2(0,\pi)$, has minimal period $2\pi$. 
This is an easy task, by
expressing $v_a(s)$ as a Fourier series $a_0+\sum_{k=2}^{\infty}a_k\cos(ks)$. Now, if $T$ is the minimal
period of $u$, we must have 
\begin{eqnarray*}
&& \hspace{-1cm}\cos(s)+v_a(s)=\cos(s+T)+v_a(s+T)\\
 &=&\cos(s)\cos(T)-\sin(s)\sin(T)
+a_0+\sum_{k=2}^{\infty}a_k\{\cos(ks)\cos(kT)-\sin(ks)\sin(kT)\}.
\end{eqnarray*}
Multiplying the first and last terms in the above expression by $\cos(s)$
and integrating in $(0,2\pi)$, we deduce that $\cos(T)=1$. Hence the minimal period is $T=2\pi$.

Finally, from this we can easily deduce that $u_a\not\equiv u_{a'}$ if $a\not=a'$. Indeed,
if $u_a\equiv u_{a'}$ then their minimal periods would agree, and thus $\lambda(a)=\lambda(a')$.
Now, this leads to $a=a'$,
since $u_a(s)=R+\frac{a}{\lambda(a)}\{\cos(\lambda(a)s)+v_a(\lambda(a)s)\}$ and $v_a(\sigma)$ is orthogonal to
$\cos(\sigma)$ in $L^2(0,\pi)$.
\end{proof}

\section{The linearized operator acting on even periodic functions}

In this section we study the linearized operator of $\Phi$ at $(0,1,\cos(\cdot))$ and we establish
the last statement on invertibility in Proposition~\ref{propPhi}. For this, the main results that we prove
are collected in the following result.

\begin{proposition}
\label{res:linear}
We have that
\begin{equation}\label{dlam}
 D_\lambda\Phi(0,1,\cos(\cdot))=\gamma \cos(\cdot)
\end{equation}
for some constant $\gamma>0$. On the other hand, for all $\psi\in X$,
\begin{equation}\label{dv}
 L\psi:=D_\varphi \Phi(0,1,\cos(\cdot))\, \psi=C^{-1}_{(1+\alpha)/2}\, (-\Delta)^{\frac{1+\alpha}{2}}\psi-
 (\textstyle{\int_\R} P_R)\, \psi-P_R*\psi,
\end{equation}
where $C_{(1+\alpha)/2}$ is the usual constant factor in the definition of 
$(-\Delta)^{\frac{1+\alpha}{2}}$ and 
$$
P_R(t)=\frac{1}{|t|^{2+\alpha}}F'\left(\frac{2R}{|t|}\right)=
\frac{1}{\left\{(2R)^2+|t|^2\right\}^{\frac{2+\alpha}{2}}}.
$$

The functions $e_k(s)=\cos(ks)$, $k=0,1,2,3\dots$, are all eigenfunctions of $L$, with eigenvalues satisfying
 \begin{equation}\label{eigen}
 Le_k=\lambda_k  e_k,\qquad \ \lambda_0<0=\lambda_1<\lambda_2<\lambda_3<\cdots\quad\text{ and }
 \end{equation}
 \begin{equation}\label{assym}
  \dfrac{\lambda_k}{k^{1+\alpha}}\to\mu_\infty>0 \qquad \text{as } k\to\infty.
\end{equation}
As a consequence, we will have that the linear operator 
$$
(D_\lambda\overline\Phi,D_v\overline\Phi)(0,1,0):\R\times X_2\to Y
$$
is continuous and invertible.
\end{proposition}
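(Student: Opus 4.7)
The plan is to exploit that formula \eqref{azero} for $\Phi(0,\lambda,\va)$ is \emph{linear} in $\va$ and depends on $\lambda$ only through $F'(2\lambda R/|t|)$, so both derivatives in the proposition reduce to explicit integrals. For $D_\lambda\Phi(0,1,\cos(\cdot))$ I differentiate \eqref{azero} at $\lambda=1$ and use $\cos(s)+\cos(s-t)=\cos(s)(1+\cos t)+\sin(s)\sin t$; the $\sin$-contribution vanishes by odd symmetry in $t$, leaving $\gamma\cos(s)$ with
\[
\gamma = -\int_\R (1+\cos t)\, F''(2R/|t|)\,\frac{2R}{|t|^{3+\alpha}}\,dt > 0,
\]
positive since $F''<0$ on $(0,+\infty)$. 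For $L\psi$, linearity gives $L\psi=\Phi(0,1,\psi)$. Symmetrizing the first integral under $t\mapsto-t$ produces $C^{-1}_{(1+\alpha)/2}(-\Delta)^{(1+\alpha)/2}\psi$, and the second integral splits directly into $-(\int_\R P_R)\psi-P_R*\psi$, giving \eqref{dv}.

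To diagonalize $L$ I use $(-\Delta)^{(1+\alpha)/2}e_k=k^{1+\alpha}e_k$ together with $P_R*e_k=\widehat{P_R}(k)\,e_k$ (valid since $P_R$ is even and integrable). This yields
\[
\lambda_k = C^{-1}_{(1+\alpha)/2}\,k^{1+\alpha} - \int_\R P_R - \widehat{P_R}(k),
\]
which I rewrite, after grouping terms, as
\[
\lambda_k = \int_\R (1-\cos(kt))\,[K_+(t)+K_-(t)]\,dt - 2\int_\R K_-(t)\,dt,
\]
with $K_+(t)=|t|^{-(2+\alpha)}$ and $K_-(t)=P_R(t)=(4R^2+t^2)^{-(2+\alpha)/2}$. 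The asymptotic \eqref{assym} is immediate: the $K_+$-piece equals $C^{-1}_{(1+\alpha)/2}k^{1+\alpha}$ by the scaling $t\mapsto t/k$, while the $K_-$-contribution stays bounded (Riemann--Lebesgue), so $\mu_\infty = C^{-1}_{(1+\alpha)/2}>0$. For the ordering, $\lambda_0=-2\int P_R<0$ by inspection, and $\lambda_1=0$ is precisely the defining identity \eqref{defR} evaluated at $s=0$. The strict monotonicity $\lambda_1<\lambda_2<\lambda_3<\cdots$ reduces to showing that $k\mapsto \int_\R (1-\cos(kt))(K_++K_-)(t)\,dt$ is strictly increasing in $k\in\{1,2,\dots\}$: the $K_+$-part equals $C\,k^{1+\alpha}$ (scaling), and the $K_-$-part equals $\int K_- - \widehat{K_-}(k)$, so one needs $\widehat{K_-}$ strictly decreasing on $[0,\infty)$. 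I would verify this via Bochner subordination: write $K_-(t)=\Gamma((2+\alpha)/2)^{-1}\int_0^\infty r^{(2+\alpha)/2-1}e^{-r(4R^2+t^2)}\,dr$, which exhibits $\widehat{K_-}(k)$ as an integral of Gaussians in $k$ against a positive weight, hence strictly decreasing (indeed completely monotone) in $k^2$.

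For the invertibility of $(D_\lambda\overline\Phi,D_v\overline\Phi)(0,1,0):\R\times X_2\to Y$, I use the decomposition $Y=Y_1\oplus Y_2$ from \eqref{subsp}: $D_\lambda\overline\Phi(0,1,0)=\gamma\cos(\cdot)\in Y_1$, and $L$ sends $X_2$ into $Y_2$ because $Le_k=\lambda_k e_k$ for $k\neq 1$. The map is thus block-diagonal; the $Y_1$-block is multiplication by $\gamma>0$. The $X_2\to Y_2$ block I would handle by writing $L=C^{-1}_{(1+\alpha)/2}(-\Delta)^{(1+\alpha)/2}-\mathcal{K}$, with $\mathcal{K}\psi=(\int P_R)\psi+P_R*\psi$ of order zero. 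The principal part is an isomorphism $X_2\to Y_2$ by fractional Schauder theory on the torus in the matching Hölder scale $C^{1,\beta}\to C^{0,\beta-\alpha}$, and $\mathcal{K}$ is a compact perturbation; Fredholm alternative combined with injectivity ($\lambda_k\neq 0$ for $k\neq 1$) then yields the isomorphism.

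The main obstacle I anticipate is the strict monotonicity of $\{\lambda_k\}_{k\geq 1}$: the oscillating $\cos(kt)$ in the $K_-$-piece could in principle create sign reversals, and the cleanest route is the Bochner/complete-monotonicity identity above. A secondary point is ensuring that $(-\Delta)^{(1+\alpha)/2}:X_2\to Y_2$ is truly an isomorphism in the Hölder scale (not merely in $L^2$), which is standard on the torus but must be checked after restricting to the subspace orthogonal to $\cos(\cdot)$.
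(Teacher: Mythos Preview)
Your computations of $D_\lambda\Phi$, of $L$, of the eigenvalues $\lambda_0$ and $\lambda_1$, and of the asymptotics \eqref{assym} coincide with the paper's. There are two places where you diverge.

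\textbf{Monotonicity of $\lambda_k$.} Your Bochner-subordination argument for the strict decrease of $\widehat{K_-}$ is correct, but the paper does something more elementary: after the substitution $\bar t=kt$ one gets $\lambda_k=k^{1+\alpha}\mu_k$ with
\[
\mu_k=\int_\R\Bigl\{(1-\cos\bar t)-(1+\cos\bar t)\Bigl(1+\tfrac{4R^2k^2}{\bar t^2}\Bigr)^{-\frac{2+\alpha}{2}}\Bigr\}\frac{d\bar t}{|\bar t|^{2+\alpha}}.
\]
Since $1+\cos\bar t\ge0$ and the second integrand decreases pointwise in $k$, $\mu_k$ is strictly increasing and (by monotone convergence) tends to $\mu_\infty$. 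Both factors $k^{1+\alpha}$ and $\mu_k$ being nonnegative and increasing for $k\ge1$ gives \eqref{eigen} directly, without any Fourier-transform identities.

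\textbf{Invertibility on $X_2\to Y_2$.} Your Fredholm scheme has a genuine gap as written: $X_2$ contains the constants (recall $V_2=\langle\cos\rangle^\perp$ includes $e_0\equiv1$), and $(-\Delta)^{(1+\alpha)/2}$ annihilates them, so your ``principal part'' is \emph{not} an isomorphism $X_2\to Y_2$. The fix is simple---take instead $A:=C^{-1}_{(1+\alpha)/2}(-\Delta)^{(1+\alpha)/2}+I$, whose symbol is strictly positive on all modes, and observe that $L-A=-(1+\int P_R)I-P_R*$ is compact $X_2\to Y_2$ (the identity piece because $C^{1,\beta}\hookrightarrow C^{0,\beta-\alpha}$ is compact on the torus, the convolution piece because $P_R$ is smooth and integrable). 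Then Fredholm plus the injectivity from $\lambda_k\neq0$ for $k\neq1$ finishes it. The paper avoids Fredholm theory altogether: it first inverts $L$ as a map $H^{1+\alpha}_{p,e}\cap V_2\to L^2_{p,e}\cap V_2$ via Fourier coefficients, then bootstraps the Sobolev solution to $X$ by rewriting $Lw=f$ as $c_1(-\Delta)^{(1+\alpha)/2}w=c_2w+P_R*w+f$, noting the right-hand side lies in $Y$ by Morrey (using $\beta<2\alpha+1/2$), and invoking Silvestre's H\"older regularity for the fractional Laplacian. Your route, once patched, is arguably cleaner since it never leaves the H\"older scale; the paper's buys a self-contained argument that does not need a torus Schauder isomorphism statement.
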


The rest of this section is dedicated to prove the above proposition and, in particular, 
the last statement on invertibility on H\"older spaces in Proposition~\ref{propPhi}.

From \eqref{azero}, we compute
\begin{equation}\label{Dlambda}
\begin{split}
  D_\lambda\overline\Phi(0,1,0)(s) &=-\int_\R\left(\cos(s)+\cos(s-t)\right)F''(2R/|t|)
  \, 2R\,\frac{dt}{|t|^{3+\alpha}}\\
  &=-\cos(s)\int_\R\left(1+\cos(t)\right)F''(2R/|t|)\,2R\,\frac{dt}{|t|^{3+\alpha}}.
\end{split}
\end{equation}
The last simplification comes from the fact that $\cos(s-t)=\cos(s)\cos(t)+\sin(s)\sin(t)$, 
$\sin(t)$ is an odd function of $t$, and $F''(2R/|t|)|t|^{-3-\alpha}$ is an even function of $t$. 
Note also that the last integral converges at $t=0$, since 
$F''(2R/|t|)\sim|t|^{3+\alpha}$ near $t=0$. 
Observe also that this integral is a strictly negative number, since $1+\cos(t)\ge 0$ and 
$F''(2R/|t|)<0$.

From \eqref{Dlambda} we deduce that 
$$
D_\lambda(\Pi_1\overline\Phi)(0,1,0)(s)=-\cos(s)\int_\R\left(1+\cos(t)\right)
F''(2R/|t|)\,2R\,\frac{dt}{|t|^{3+\alpha}}
$$ 
and that this integral is negative, and also that 
$$
D_\lambda(\Pi_2\overline\Phi)(0,1,0)\equiv 0.
$$

As a consequence of these two statements on $D_\lambda(\Pi_i\overline\Phi)(0,1,0)$, 
to establish the last statement in Proposition~\ref{res:linear} it only remains to prove that 
$D_v(\Pi_2\overline\Phi)(0,1,0)$ is an isomorphism between $X_2$ and $Y_2$.
But, from \eqref{azero}, and with $P_R$ as in the statement of Proposition~\ref{res:linear}, we have
\begin{eqnarray*}
    Lw (s):&=&D_v\overline\Phi(0,1,0)\, w(s)\\
    &=&\int_\R(w(s)-w(s-t))\frac{dt}{|t|^{2+\alpha}}-
    \int_\R (w(s)+w(s-t))F'\left(\frac{2R}{|t|}\right)\frac{dt}{|t|^{2+\alpha}}\\
    &=& \int_\R(w(s)-w(s-t))\frac{dt}{|t|^{2+\alpha}}-(\textstyle{\int_\R} P_R)\, w(s)-(P_R*w)(s)\\
    &=&\left\{ C^{-1}_{(1+\alpha)/2}\, (-\Delta)^{\frac{1+\alpha}{2}}w-(\textstyle{\int_\R} P_R)w-P_R*w
    \right\} (s).
\end{eqnarray*}
We emphasize that $P_R\in (C^\infty\cap L^1\cap L^\infty)(\R)$ and that $P_R(t)$ is an even function of $t$.

The following simple result states that the functions $\cos(k\cdot)$ are always eigenfunctions of
any convolution operator with an even kernel.

\begin{lemma}
\label{res:eigenfunctions}
{\rm i)} If $P\in L^1(\R)$ is an even function and $e_k(s)=\cos(ks)$, $k=0,1,2,3\dots$, then 
$$
(P*e_k)(s)=(\textstyle{\int_\R}\cos(kt)P(t)\, dt)\ e_k(s)
$$
for all $s\in\R$. In particular, $e_k$ is an eigenfunction of the operator $P*\cdot$.

{\rm ii)} In a similar way, the functions $e_k(s)=\cos(ks)$ are eigenfunctions of $(-\Delta)^{\frac{1+\alpha}{2}}$.
\end{lemma}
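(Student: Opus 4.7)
The plan is to verify both parts by direct computation, using the standard trick that oddness in $t$ kills certain integrands.

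For part (i), I would write
\[
(P*e_k)(s)=\int_\R P(t)\,\cos(k(s-t))\,dt,
\]
and then use the addition formula $\cos(k(s-t))=\cos(ks)\cos(kt)+\sin(ks)\sin(kt)$. Since $P$ is even and $t\mapsto \sin(kt)$ is odd, the second term integrates to zero, leaving
\[
(P*e_k)(s)=\cos(ks)\int_\R P(t)\cos(kt)\,dt,
\]
which is the claimed identity, with eigenvalue $\int_\R P(t)\cos(kt)\,dt$ (the Fourier cosine transform of $P$ at frequency $k$). Integrability of $P\in L^1(\R)$ is enough to justify everything by Fubini/absolute convergence.

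For part (ii), I would use the singular integral representation
\[
(-\Delta)^{(1+\alpha)/2}e_k(s)=C_{(1+\alpha)/2}\,\mathrm{PV}\int_\R \frac{e_k(s)-e_k(s-t)}{|t|^{2+\alpha}}\,dt,
\]
and expand $\cos(ks)-\cos(k(s-t))=\cos(ks)\bigl(1-\cos(kt)\bigr)-\sin(ks)\sin(kt)$. The kernel $|t|^{-(2+\alpha)}$ is even, so the $\sin(ks)\sin(kt)$ piece vanishes (in the principal value sense) by oddness in $t$. This yields
\[
(-\Delta)^{(1+\alpha)/2}e_k(s)=C_{(1+\alpha)/2}\cos(ks)\int_\R \frac{1-\cos(kt)}{|t|^{2+\alpha}}\,dt,
\]
and the substitution $u=kt$ gives the eigenvalue $C_{(1+\alpha)/2}\,|k|^{1+\alpha}\int_\R (1-\cos u)|u|^{-(2+\alpha)}\,du$, which is finite since $1-\cos u=O(u^2)$ near $0$ and bounded at infinity (using $1+\alpha\in(1,2)$). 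For $k=0$ the eigenvalue is of course $0$.

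No serious obstacle is expected: both statements are one-line computations once the odd/even parity in $t$ is exploited. The only minor point to mention is the principal value interpretation in part (ii), which is handled by writing the integral over $\{|t|\ge\varepsilon\}$, using oddness on that symmetric domain, and then passing to the limit $\varepsilon\to 0$ using $1-\cos(kt)=O(t^2)$ at the origin.
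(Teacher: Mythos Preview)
Your proof is correct and follows essentially the same approach as the paper: both parts use the cosine addition formula and the odd/even parity of the integrand in $t$ to reduce to a scalar multiple of $\cos(ks)$. Your version adds a bit more detail than the paper (the explicit eigenvalue via the substitution $u=kt$ and the convergence check), but the argument is identical.
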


\begin{proof}
i) This is a simple calculation:
\begin{eqnarray*}
(P*e_k)(s)&=&\textstyle{\int_\R}\cos(k(s-t))P(t)\ dt=\textstyle{\int_\R}\left\{\cos(ks)\cos(kt)+\sin(ks)
\sin(kt)\right\}P(t)\,dt\\
&=&\left(\textstyle{\int_\R}\cos(kt)P(t)\ dt\right)\cos(ks),
\end{eqnarray*}
since $P$ is even.

ii) For the fractional Laplacian, in the principal value sense, we have
$$
\int_\R\dfrac{\cos(ks)-\cos(ks-kt)}{|t|^{2+\alpha}}\ dt=\int_\R\dfrac{\cos(ks)-\cos(ks)\cos(kt)-
\sin(ks)\sin(kt)}{|t|^{2+\alpha}}\ dt,$$
and this last integral, in the principal value sense, is equal to 
$$
\left(\int_\R\dfrac{1-\cos(kt)}{|t|^{2+\alpha}}\ dt\right) \cos(ks),
$$
as desired.
\end{proof}

Thus,
\begin{equation*}
  Le_k=\left\{\int_\R\dfrac{1-\cos(kt)}{|t|^{2+\alpha}}\ dt-\int_\R\dfrac{1+\cos(kt)}{|t|^{2+\alpha}}
  F'\left(\dfrac{2R}{|t|}\right) dt\right\}e_k=:\lambda_ke_k, 
\end{equation*}
for $k=0,1,2,\dots$.
Note that $\lambda_0=-\int_\R 2|t|^{-2-\alpha}F'(2R/|t|)\, dt<0$. 
Now, 
$$
\lambda_1=\int_\R\frac{1-\cos(t)}{|t|^{2+\alpha}}\ dt-\int_\R\frac{1+\cos(t)}{|t|^{2+\alpha}}
F'\left(\frac{2R}{|t|}\right)\ dt=0,
$$
since this value is the same as the factor multiplying $\cos(s)$ in expression \eqref{defR}, defining $R$, and that we next take to be equal to zero. 
Thus, $L\cos(\cdot)=Le_1=\lambda_1e_1\equiv 0$.

Let us see now that there exists a unique $R>0$ for which $\lambda_1=0$.

\begin{lemma}
\label{existsR}
There exists a unique $R>0$ ---which depends only on $\alpha$--- such that 
\begin{equation}\label{eqR}
  \lambda_1(R)=\int_\R\dfrac{1-\cos(t)}{|t|^{2+\alpha}}\ dt-\int_\R\dfrac{1+\cos(t)}{|t|^{2+\alpha}}
  F'\left(\dfrac{2R}{|t|}\right)\ dt=0.
\end{equation}
\end{lemma}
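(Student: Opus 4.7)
The plan is to view \eqref{eqR} as a monotone scalar equation in $R$. First, using the explicit form $F'(q)=(1+q^{2})^{-(2+\alpha)/2}$ from \eqref{funF}, I would rewrite
\[
F'\!\left(\frac{2R}{|t|}\right)=\frac{|t|^{2+\alpha}}{(t^{2}+4R^{2})^{(2+\alpha)/2}},
\]
so that \eqref{eqR} becomes $A=g(R)$, where
\[
A:=\int_{\R}\frac{1-\cos(t)}{|t|^{2+\alpha}}\,dt,\qquad g(R):=\int_{\R}\frac{1+\cos(t)}{(t^{2}+4R^{2})^{(2+\alpha)/2}}\,dt.
\]
The constant $A$ is a finite positive number since $1-\cos(t)=O(t^{2})$ near $0$ (integrable against $|t|^{-2-\alpha}$ because $\alpha<1$) and $|t|^{-2-\alpha}$ is integrable at infinity.

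Next I would establish the three properties of $g$ that give existence and uniqueness. Finiteness for every $R>0$ is immediate since the integrand is bounded by $2(4R^{2})^{-(2+\alpha)/2}$ near $t=0$ and decays like $|t|^{-(2+\alpha)}$ at infinity. Strict monotonicity: for each fixed $t$, the factor $(t^{2}+4R^{2})^{-(2+\alpha)/2}$ is strictly decreasing in $R$, and $1+\cos(t)\ge 0$ is not identically zero, so $g$ is strictly decreasing on $(0,\infty)$. Continuity of $g$ follows from dominated convergence (using the bound above on any interval $[R_{0},R_{1}]$ with $R_{0}>0$).

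For the two limits, as $R\to 0^{+}$ the integrands increase pointwise to $(1+\cos(t))|t|^{-(2+\alpha)}$, and since $1+\cos(t)\to 2$ near $t=0$ while $|t|^{-(2+\alpha)}$ is not integrable at $0$, monotone convergence gives $g(R)\to+\infty$. As $R\to\infty$, the substitution $t=2Ru$ yields
\[
g(R)\le 2\int_{\R}\frac{dt}{(t^{2}+4R^{2})^{(2+\alpha)/2}}=2(2R)^{-(1+\alpha)}\int_{\R}\frac{du}{(1+u^{2})^{(2+\alpha)/2}}\longrightarrow 0.
\]
Combining continuity, strict monotonicity, and the boundary values, the intermediate value theorem produces a unique $R>0$ with $g(R)=A$, which depends only on $\alpha$ since both $A$ and $g$ do.

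There is no genuine obstacle here; the only mildly delicate point is justifying $g(R)\to+\infty$ as $R\to 0$, which I handle by monotone convergence using that $1+\cos(t)$ is bounded away from $0$ on a neighbourhood of the origin where $|t|^{-(2+\alpha)}$ fails to be integrable.
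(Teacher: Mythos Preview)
Your proposal is correct and follows essentially the same route as the paper: both arguments reduce \eqref{eqR} to a scalar equation, establish strict monotonicity in $R$, and compute the limits at $R\to 0^{+}$ and $R\to\infty$ to invoke the intermediate value theorem. The only cosmetic difference is that the paper differentiates $\lambda_{1}(R)$ directly (using $F''<0$) to get monotonicity, whereas you argue pointwise monotonicity of the integrand after substituting the explicit form of $F'$; the endpoint limits are likewise obtained by monotone convergence in both cases.
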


\begin{proof}
First, 
$$
\lambda'_1(R)=-\int_\R\dfrac{1+\cos(t)}{|t|^{2+\alpha}}\ \dfrac{2}{|t|} \
F''\left(\dfrac{2R}{|t|}\right)\ dt>0,
$$ 
and therefore $\lambda_1$ is increasing in $R$. Second, applying the monotone convergence theorem for integrals to
\begin{equation*}
\lambda_1(R)=\int_\R\left\{\frac{1-\cos(t)}{|t|^{2+\alpha}}-\frac{1+\cos(t)}{|t|^{2+\alpha}}
\left(1+\frac{(2R)^2}{|t|^2}\right)^{-\frac{2+\alpha}{2}}\right\}\, dt,
\end{equation*}
we deduce that $\lambda_1(R)\nearrow\int_\R\frac{1-\cos(t)}{|t|^{2+\alpha}}\ dt>0$ as $R\to+\infty$, and also that  
$\lambda_1(R)\searrow \int_\R-\frac{2\cos(t)}{|t|^{2+\alpha}}\ dt=-\infty$ as $R\to 0$. Thus, the result is proved.
\end{proof}

Let us see now that the sequence $\lambda_k$ is increasing in $k$, 
and that therefore we have $\lambda_0<0=\lambda_1<\lambda_2<\lambda_3<\cdots$.
Indeed, using the change of variables $\overline t=kt$,
\begin{eqnarray*}
\lambda_k&=&\int_\R\left\{(1-\cos(kt))-(1+\cos(kt))F'(2R/|t|)\right\}\ \frac{dt}{|t|^{2+\alpha}}\\
&=&k^{1+\alpha}\int_\R\left\{(1-\cos(\overline t))-(1+\cos(\overline t))\left(1+\dfrac
{4R^2}{|\overline t/k|^2}
\right)^{-\frac{2+\alpha}{2}}\right\}\dfrac{d\overline t}{|\overline t|^{2+\alpha}}=k^{1+\alpha}\mu_k,
\end{eqnarray*}
where we have defined
$$
\mu_k=\int_\R\left\{(1-\cos(\overline t))-(1+\cos(\overline t))
\left(1+\dfrac{4R^2k^2}{|\overline t|^2}\right)^{-\frac{2+\alpha}{2}}\right\}
\dfrac{d\overline t}{|\overline t|^{2+\alpha}}.
$$
But both $k^{1+\alpha}$ and $\mu_k$ are increasing in $k$. And, by the monotone convergence theorem, 
$$
-\int_\R(1+\cos(\overline t))\left(1+\frac{4R^2k^2}{|\overline t|^2}\right)^{-\frac{2+\alpha}{2}}
\dfrac{d\overline t}{|\overline t|^{2+\alpha}} \nearrow 0
$$
as $k\to+\infty$. Hence $\mu_k \to\int_\R\frac{1-\cos(\overline t)}{|\overline t|^{2+\alpha}}\ 
d\overline t=:\mu_\infty>0$. 
Thus, and this will be crucial in the next argument,
\begin{equation*}
  \dfrac{\lambda_k}{k^{1+\alpha}}\to\mu_\infty>0 \qquad\text{as } k\to +\infty.
\end{equation*}

From \eqref{eigen}, $L\cos(\cdot)=0$, the asymptotics \eqref{assym}, 
and the definition of fractional Sobolev spaces in terms of Fourier coefficients,
we deduce that 
$$
L_{|V_2}=\Pi_2 L_{|V_2}: H^{1+\alpha}_{p,e}\cap V_2\to L^2_{p,e}\cap V_2 \quad\text{ is 
continuous and invertible},
$$
where $H^{1+\alpha}_{p,e}$ denotes the space of functions which are even and $2\pi$ periodic in $\R$
and belong to the Sobolev space $H^{1+\alpha} $ in bounded sets of $\R$. The same definition applies
to $L^2_{p,e}$.

To complete the proof of Proposition~\ref{res:linear}, we 
need to show that $L$ is also continuous and invertible from
$X_2$ onto $Y_2$. The fact that $L$ sends $X_2$ to $Y$ continuously (and thus into $Y_2$), 
follows from $L_{|X_2}=D_v\overline\Phi(0,1,0)$ and 
the first part of Proposition~\ref{propPhi} (to be proved in next section) stating that 
$\overline\Phi$ is $C^1$ from its domain in $\R\times\R\times X_2$ into $Y$.

It remains to establish that $L$ is invertible in these spaces. 
For this, given $f\in Y_2$, since then $f\in L^{2}_{p,e}\cap V_2$,
we know that we can find a unique $w\in H^{1+\alpha}_{p,e}\cap V_2$ (using Fourier series and the eigenvalues
$\lambda_k$ above) such that $Lw=f$. Recall that $L$ is given by 
$$
Lw=c_1 (-\Delta)^{\frac{1+\alpha}{2}} w - c_2 w -P_R * w
$$
for some positive constants $c_i$. Hence, $Lw=f$ can be written as
\begin{equation}\label{invert}
c_1 (-\Delta)^{\frac{1+\alpha}{2}}w = c_2 w +P_R * w + f \quad\text{ in }\R.
\end{equation} 
Since $w\in H^{1+\alpha}_{p,e}$ and $P_R$ is smooth and integrable, 
we have that $P_R*w\in H^{1+\alpha}_{p,e}$. Use now that
$H^{1+\alpha}_{p,e}\subset Y=C^{0,\beta-\alpha}_{p,e}$ by Morrey's embedding, since $1+\alpha-1/2=
1/2+\alpha>\beta-\alpha$ ---recall \eqref{defbeta}.

Therefore, the right hand side of \eqref{invert} belongs to $Y=C^{0,\beta-\alpha}_{p,e}$. 
By standard H\"older regularity for the fractional Laplacian ---see Proposition~2.8 of \cite{Sil}---,
it follows that $w\in X= C^{1+\alpha,\beta}_{p,e}$.

\section{Differentiability properties of the nonlinear operator acting on even periodic functions}

To prove the differentiability properties of the operator $\overline\Phi=\overline\Phi(a,\lambda,v)$ 
stated in Proposition~\ref{propPhi}, it is sufficient to establish that 
$$
\Phi=\Phi(a,\lambda,\va): (-\nu,\nu)\times(1/2,3/2)\times B_{10}(0)\subset \R\times\R\times X\to Y
$$
is of class $C^1$. Here one should recall that $\va=\cos(\cdot)+v$ and note that, by \eqref{normX}, 
$\Vert\cos(\cdot)\Vert_X\leq 1+1+\pi<9$. 

We start studying the first term, $\Phi_1$, of the operator ---which is the most delicate. It turns out
to be a nonlinear version of the fractional Laplacian $(-\Delta)^{\frac{1+\alpha}{2}}$. More precisely,
by expression \eqref{Phi12} below, it is a quasilinear version of the fractional Laplacian ---the
second order increments in \eqref{Phi12} are multiplied by a nonlinear ``coefficient'' $F_3$
depending only on the first order increments. Expression \eqref{Phi12} will be most useful to deduce 
all properties of $\Phi_1$.

Instead, $\Phi_2$, that we study later on in this section, is a nonlinear and nonlocal zero order 
operator ---recall that is given by \eqref{Phi220}. For instance, we will see that $\Phi_2$ sends
the space of Lipschitz functions into itself.

We start studying $\Phi_1$, given by \eqref{Phi10}-\eqref{F1},
\begin{equation}\label{Phi1}
 \Phi_1 (a,\varphi):=\int_\R F_1(a,\delta_- \varphi)\ d\mu(t),
\end{equation}
where
$$
F_1(a,q):=\int_0^q \frac{d\tau}{(1+a^2\tau^2)^{\frac{2+\alpha}{2}}}.
$$

\begin{lemma}\label{lemPhi1}
 The operator $\Phi_1=\Phi_1 (a,\varphi): 
 \R\times X\to Y$ is of class $C^1$. 
\end{lemma}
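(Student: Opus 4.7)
The plan is to exploit the quasilinear structure hinted at by the author. Using the change of variable $t\mapsto -t$ (which preserves $d\mu$) and the oddness of $q\mapsto F_1(a,q)$, I would symmetrize to write
\begin{equation*}
\Phi_1(a,\varphi)(s)=\tfrac12\int_\R \bigl[F_1(a,\delta_-\varphi)+F_1(a,\delta_+\varphi)\bigr]\,d\mu(t)
=\tfrac12\int_\R \frac{\Delta^2\varphi(s,t)}{|t|^{2+\alpha}}\,F_3(a,\delta_-\varphi,\delta_+\varphi)\,dt,
\end{equation*}
where $\Delta^2\varphi(s,t):=2\varphi(s)-\varphi(s-t)-\varphi(s+t)$ and $F_3(a,p,q):=\int_0^1\bigl(1+a^2[-q+\theta(p+q)]^2\bigr)^{-(2+\alpha)/2}\,d\theta$ is smooth in $(a,p,q)$, bounded by $1$, with $F_3(0,\cdot,\cdot)\equiv 1$. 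At $a=0$ this recovers the usual second-difference formula for a multiple of $(-\Delta)^{(1+\alpha)/2}\varphi$. This is the representation on which every subsequent step rests.

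With this formula I would next show that $\Phi_1(a,\varphi)\in Y=C^{0,\beta-\alpha}_{p,e}$ with bounds polynomial in $\|\varphi\|_X$ and $|a|$. For $|t|\le 1$, the bound $|\Delta^2\varphi(s,t)|\le C\|\varphi\|_X |t|^{1+\beta}$ (valid since $\varphi\in C^{1,\beta}$) combined with $|F_3|\le 1$ makes the integrand majorized by $C|t|^{\beta-\alpha-1}$, integrable because $\beta>\alpha$; for $|t|\ge 1$, one uses $|\Delta^2\varphi|\le 4\|\varphi\|_\infty$ and decay $|t|^{-(2+\alpha)}$. The H\"older seminorm is obtained by the classical split at the threshold $r=|s-\bar s|$: in the near range $|t|\le r$ one uses the pointwise bound together with the measure $O(r)$ of the region, while in the far range $|t|>r$ one estimates the increments of $\Delta^2\varphi$ and (through $\delta_\pm\varphi$) of $F_3$ by $C\|\varphi\|_X |s-\bar s|^\beta \min\{1,|t|\}$, yielding overall an increment of order $|s-\bar s|^{\beta-\alpha}$.

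For the $C^1$ character, the natural candidates for the partial Fr\'echet derivatives are
\begin{equation*}
D_\varphi\Phi_1(a,\varphi)\psi=\tfrac12\int_\R \frac{\Delta^2\psi}{|t|^{2+\alpha}}F_3\,dt
+\tfrac12\int_\R \frac{\Delta^2\varphi}{|t|^{2+\alpha}}\bigl(\partial_p F_3\,\delta_-\psi+\partial_q F_3\,\delta_+\psi\bigr)dt,
\end{equation*}
together with an analogous explicit formula for $D_a\Phi_1$. Differentiability is then checked by a first-order Taylor expansion of $F_3$ in all its arguments, the remainder being a product of two first-order increments of $\psi$ (or of $\psi$ and $h$) and thus controlled in $Y$ by the same near/far split as above. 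Continuity of the derivative in $(a,\varphi)$ follows from the joint continuity of $F_3,\partial_pF_3,\partial_qF_3$ combined with dominated convergence using the $|t|^{\beta-\alpha-1}$ and $|t|^{-(2+\alpha)}$ majorants constructed in the boundedness step.

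The main obstacle is the H\"older estimate of the derivative: one has to track how the increments of $F_3$ through its arguments $\delta_\pm\varphi$ interact with the second differences $\Delta^2\varphi$ and $\Delta^2\psi$ in both the near and far regions, ensuring that no term demands more than the $C^{1,\beta}$ regularity available on $\varphi$. The choice of target space $Y=C^{0,\beta-\alpha}$ with precisely this drop of $\alpha$ derivatives is tailored to what the fractional Laplacian $(-\Delta)^{(1+\alpha)/2}$ loses, and the condition $\beta>\alpha$ from \eqref{defbeta} is exactly what makes all the near-zero integrals finite.
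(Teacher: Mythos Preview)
Your approach is essentially that of the paper: the same symmetrization yielding the quasilinear form $\tfrac12\int_\R(\delta_-\varphi+\delta_+\varphi)\,F_3(a,\delta_-\varphi,\delta_+\varphi)\,d\mu(t)$, the same $C^{1,\beta}$ bound on second differences, and the same near/far split at $|t|=|s-\bar s|$ for the H\"older seminorm. One point to sharpen: dominated convergence alone does not give continuity of $D_a\Phi_1$ and $D_\varphi\Phi_1$ in the $Y$-norm, since $Y$ carries a H\"older seminorm; the paper handles this by redoing the seminorm estimate for the difference (writing the integrand as $G\phi(s)H\phi(s)$, adding and subtracting terms) so that every piece is bounded by $c(k)\min(|s-\bar s|\,|t|^{\beta-1},|t|^\beta)$ with $c(k)\to 0$ as $(a_k,\varphi_k)\to(a,\varphi)$.
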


\begin{proof}
Changing $t$ by $-t$ in \eqref{Phi1} and using that $\delta_-\va(s,-t)=\delta_+\va(s,t)$
(recall \eqref{deltaplus} for the definition of $\delta_+$), we have
$$
\Phi_1 (a,\varphi)=\frac{1}{2}\int_\R \left\{ F_1(a,\delta_- \varphi)+F_1(a,\delta_+ \varphi)
\right\} d\mu(t).
$$
Denoting
$$
q=\delta_- \va \quad\text{and}\quad p=\delta_+\va ,
$$
we have
\begin{eqnarray*}
F_1(a,q)+F_1(a,p)&=&\left\{\int_0^q +\int_0^p \right\} \frac{d\tau}{(1+a^2\tau^2)^{\frac{2+\alpha}{2}}}
=
\left\{\int_0^q +\int_{-p}^0 \right\} \frac{d\tau}{(1+a^2\tau^2)^{\frac{2+\alpha}{2}}}\\
&=& (q+p)\ \frac{1}{q+p}\int_{-p}^q \frac{d\tau}{(1+a^2\tau^2)^{\frac{2+\alpha}{2}}}.
\end{eqnarray*}
Making the change of variables $ \tau=-p+(q+p)\overline\tau$, we have that
$F_1(a,q)+F_1(a,p)=(q+p)F_3(a,q,p)$, where
$$
F_3(a,q,p):=\int_0^1 \frac{d\overline\tau}{\left\{ 1+ a^2\left( -p+(q+p)\overline\tau \right)^2 
\right\}^{\frac{2+\alpha}{2}}}.
$$
Therefore, we conclude that
\begin{equation}\label{Phi12}
\Phi_1 (a,\varphi):=\frac{1}{2}\int_\R (\delta_-\va+\delta_+\va)\ F_3(a,\delta_- \va, \delta_+\va)
\ d\mu(t).
\end{equation}
This is the expression that will be useful to establish the lemma.

We first collect several important inequalities for functions in $X$. For all $\va\in X$ and real numbers
$s,\overline s$, and $t$, we have
\begin{equation}\label{indel1}
 |\delta_-\varphi(s,t)|+|\delta_+\varphi(s,t)|\leq 2 \Vert\va\Vert_X,
\end{equation}
\begin{equation}\label{indel2}
 |(\delta_- \va +\delta_+\varphi)(s,t)|\leq 2 \Vert\va\Vert_X \, |t|^\beta,
\end{equation}
and
\begin{equation}\label{indel3}
 |\delta_\mp \varphi(s,t) -\delta_\mp \varphi(\overline s,t)|\leq \Vert\va\Vert_X \, |s-\overline s|\,
 |t|^{\beta -1}.
\end{equation}
Inequalities \eqref{indel2} and \eqref{indel3} are the crucial point to prove that $\Phi_1$ and its derivatives
send functions in $X$ to functions in $Y$. This will be accomplished with the inequalities following
\eqref{fracholder} ---an argument in H\"older spaces
already used in \cite{Sil} for the pure fractional Laplacian.

While \eqref{indel1} is obvious, \eqref{indel2} is easily proved as follows:
\begin{eqnarray*}
 |\va(s)-\va(s-t)+\va(s)-\va(s+t)|&=& \left| -\int_ 0^1 \frac{d}{d\rho} \va(s-\rho t)\, d\rho
 - \int_ 0^1 \frac{d}{d\rho} \va(s+\rho t) d\rho\right| \\
 &=& \left| \int_ 0^1 \left\{ \va'(s-\rho t) -\va'(s+\rho t) \right\} t\, d\rho\right| \\
 &\leq & \int_ 0^1 \Vert\va\Vert_X\, |2\rho t|^\beta |t|\, d\rho \leq 
 2 \Vert\va\Vert_X \, |t|^{\beta +1}.
\end{eqnarray*}
Similarly, we can establish \eqref{indel3}, as follows:
\begin{eqnarray*}
& & \hspace{-1.5cm}  \left|\frac{\va(s)-\va(s-t)}{|t|}-\frac{\va(\overline s)-\va(\overline s-t)}{|t|}\right| \\
&=& \left| \int_ 0^1 \frac{d}{d\rho} \, \frac{\va\left(\rho s+(1-\rho)\overline s\right) 
 -\va\left(\rho s+(1-\rho)\overline s-t\right)}{|t|}\, d\rho \right| \\
 &=& \left| (s-\overline s) \int_ 0^1  \frac{\va'\left(\rho s+(1-\rho)\overline s\right) -
 \va'\left(\rho s+(1-\rho)\overline s-t\right)}{|t|}\,  d\rho \right| \\
 & \leq & \Vert\va\Vert_X \,  |s-\overline s|\, |t|^{\beta -1}.
\end{eqnarray*}
The same bound for $\delta_+\va$ follows from $\delta_+\va(s,t)=\delta_-\va(s,-t)$.

We can now check that the integrand in \eqref{Phi12} is an integrable function, and thus
$\Phi_1(a,\va)$ is well defined by \eqref{Phi12}. That the function is integrable in $\{|t|\geq 1\}$
follows from \eqref{indel1} and $0\leq F_3\leq 1$ ---recall that $d\mu(t)=|t|^{-1-\alpha}\, dt$.
Instead for the integral in $(-1,1)$, we use  \eqref{indel2}, $0\leq F_3\leq 1$, and that
$\int_{-1}^{1}|t|^{\beta -1-\alpha}\, dt < \infty$.

We now verify that $\Phi_1(a,\va)\in Y$ for all $\va\in X$. We add and subtract a term to have 
$$
2\left\{ \Phi_1(a,\va)(s)- \Phi_1(a,\va)(\overline s)\right\}=\int_\R i_1(s,t)\, d\mu(t) +
\int_\R i_2(s,t)\, d\mu(t),
$$
where 
$$
i_1(s,t)=\left\{ (\delta_-\va+\delta_+\va)(s,t)-(\delta_-\va+\delta_+\va)(\overline s,t)\right\}
F_3\left(a,\delta_-\va(s,t),\delta_+\va(s,t)\right)
$$
and
$$
i_2(s,t)=(\delta_-\va+\delta_+\va)(\overline s,t) \left\{ F_3\left(a,\delta_-\va(s,t),\delta_+\va(s,t)\right)
- F_3\left(a,\delta_-\va(\overline s,t),\delta_+\va(\overline s,t)\right)\right\}.
$$

We claim that
\begin{equation}\label{i12}
 (|i_1|+|i_2|)(s,t)\leq C \min(|s-\overline s|\, |t|^{\beta-1},|t|^\beta ), 
\end{equation}
where $C$ is a constant depending only on an upper bound for $|a|+\Vert\va\Vert_X$.
Indeed, for $i_1$, this follows from \eqref{indel2}, \eqref{indel3}, and $0\leq F_3\leq 1$.
For $i_2$, the bound by $|t|^\beta$ follows from \eqref{indel2} and $0\leq F_3\leq 1$.
The first bound for $|i_2|$ follows from \eqref{indel1} and the use of the intermediate value formula to express 
the difference of values for $F_3$ in the second factor in $i_2$. Here we also use that
$(|\partial_qF_3|+|\partial_pF_3|)(a,q,p)\leq C a^2(|q|+|p|)\leq C$ ---the last inequality by \eqref{indel1}.

Using \eqref{i12}, we can prove that $\Phi_1(a,\va)\in Y$. Indeed, 
an $L^\infty$ bound for $\Phi_1(a,\va)$ has already been discussed above. Now
\begin{eqnarray}\label{fracholder}
 \left|  \Phi_1(a,\va)(s)- \Phi_1(a,\va)(\overline s)\right| &\leq & 
 C\int_\R (|i_1|+|i_2|)\, d\mu(t) \\
 &\leq&  C\int_{-|s-\overline s|}^{|s-\overline s|} |t|^\beta \frac{dt}{|t|^{1+\alpha}}
 +C\int_{\{|t|\geq|s-\overline s|\}} |s-\overline s| |t|^{\beta-1}\frac{dt}{|t|^{1+\alpha}} \nonumber \\
 &=& C|s-\overline s|^{\beta -\alpha} + C |s-\overline s|\, |s-\overline s|^{\beta -\alpha-1}
 =  C|s-\overline s|^{\beta -\alpha}. \nonumber
\end{eqnarray}

Next we check that $\Phi_1$ is differentiable with respect to $a$ and that $D_a \Phi_1$
is continuous from $\R\times X$ to $Y$. From \eqref{Phi12} we have that
\begin{equation}\label{daPhi1}
D_a \Phi_1(a,\va)=\frac{1}{2}\int_\R (\delta_-\va +\delta_+\va)\,  
\partial_aF_3 (a, \delta_-\va,\delta_+\va)\, d\mu(t). 
\end{equation}
This expression has exactly the same form as the one for $\Phi_1(a,\va)$, but with $F_3$ replaced
by $\partial_aF_3$. As for $F_3$, note that  $\partial_aF_3$ is a smooth and bounded function, since
$|q|+|p|=|\delta_-\va|+|\delta_+\va|\leq C$ by \eqref{indel1}. Thus, as before, $D_a \Phi_1$
is well defined and belongs to $Y$. 

To prove that $D_a \Phi_1$ is continuous from $\R\times X$ to $Y$, we take a sequence $(a_{k},\va_{k})$ converging
in $\R\times X$ to $(a,\va)$. We need to bound
$$
(D_a\Phi_1(a_{k},\va_{k}))(s)-(D_a\Phi_1(a,\va))(s)-(D_a\Phi_1(a_{k},\va_{k}))(\overline s)
+(D_a\Phi_1(a,\va))(\overline s).
$$
To do this, we need to add and subtract a good number of terms, and thus it is convenient to write the above
expression in a more compact (or ``symbolic'') way.
We write the integrand in \eqref{daPhi1} as the expression $G\phi (s) H\phi(s)$,
where $\phi=(a,\va)$, $\phi_k = (a_k,\varphi_k)$, $t$ is given,
$$
G\phi (s):=(\delta_-\va +\delta_+\va)(s,t),
$$
and
$$
H\phi(s):=\partial_aF_3 (a, \delta_-\va,\delta_+\va)(s,t).
$$

We must bound
$$
G\phi (s) H\phi(s)-G\phi_{k} (s) H\phi_{k}(s)-G\phi (\overline s) H\phi(\overline s)+G\phi_{k} (\overline s) H\phi_{k}(\overline s).
$$
We write this expression as
\begin{eqnarray*}
&& G\phi (s) (H\phi-H\phi_{k})(s) \\
&&+(G\phi-G\phi_{k}) (s) H\phi_{k}(s)\\
&& -G\phi (\overline s) (H\phi-H\phi_{k})(\overline s) \\
&& -(G\phi-G\phi_{k}) (\overline s) H\phi_{k}(\overline s).
\end{eqnarray*}
We add and subtract one term in the first and third lines (and another one in the second and fourth lines) 
to have this equal to
\begin{eqnarray*}
&& (G\phi (s)-G\phi(\overline s)) (H\phi-H\phi_{k})(s) \\
&&+(G\phi-G\phi_{k}) (s) (H\phi_{k}(s) - H\phi_{k}(\overline s))\\
&& +G\phi (\overline s) ((H\phi-H\phi_{k})(s) - (H\phi-H\phi_{k})(\overline s)) \\
&& +((G\phi-G\phi_{k}) (s)-(G\phi-G\phi_{k}) (\overline s)) H\phi_{k}(\overline s).
\end{eqnarray*}
Now, every of these four lines can be controlled in absolute value by the bound
$$
c(k) \min(|s-\overline s|\, |t|^{\beta-1},|t|^\beta ),
$$
where the constant $c(k)\to 0$ as $k\to\infty$. 
This is done in the way explained before, right after \eqref{i12}, taking care now to control the smallness of 
terms like
$$
|(H\phi-H\phi_{k})(s)|= |\partial_aF_3 (a, \delta_-\va,\delta_+\va)(s,t)-\partial_aF_3 (a_{k}, \delta_-\va_{k},\delta_+\va_{k})(s,t)|
$$
with the intermediate value theorem. Finally,
we can integrate in $t$ and proceed as in \eqref{fracholder} (and the
inequalities following it), to deduce that $D_a\Phi_1(a_{k},\va_{k})$ converges to $D_a\Phi_1(a,\va)$ in $Y$.

Finally, we need to prove the $C^1$ character of $\Phi_1$ with respect to the $\va$ variable.
We have that
\begin{equation}\label{diff}
\begin{split}
&  \hspace{-.5cm} 2D_\va\Phi_1(a,\va)\,\psi =\int_\R (\delta_-\psi +\delta_+\psi)\,  
F_3 (a, \delta_-\va,\delta_+\va)\, d\mu(t) \\
\ \hspace{.5cm} &+  \int_\R (\delta_-\va +\delta_+\va)\left\{  
\partial_qF_3 (a, \delta_-\va,\delta_+\va)\delta_-\psi + 
\partial_pF_3 (a, \delta_-\va,\delta_+\va)\delta_+\psi \right\} d\mu(t). 
\end{split}
\end{equation}
Recall that $q=\delta_-\va$, $p=\delta_+\va$, $\delta_-\psi$, and $\delta_+\psi$ are all bounded,
by \eqref{indel1}. Note that $\partial_qF_3$, $\partial_pF_3$, $\partial_{qq}F_3$,
$\partial_{qp}F_3$, and $\partial_{pp}F_3$ are all smooth and bounded functions. Now, to control
$| (D_\va\Phi_1(a,\va)\,\psi)(s)-(D_\va\Phi_1(a,\va)\,\psi)(\overline s)|$, we proceed
as we did above for $|\Phi_1(a,\va)(s)-\Phi_1(a,\va)(\overline s)|$ adding and subtracting one 
auxiliary term. In this way we see that $ D_\va\Phi_1(a,\va)\,\psi \in Y$.

The continuity of $D_\va\Phi_1$ as a function of $(a,\va)$ 
with values in the space of bounded linear operators from $X$ to $Y$ is proved in a similar way to 
the one above for the continuity of $D_a\Phi_1$. Here we must look at
$$
(D_\va\Phi_1(a_{k},\va_{k})\,\psi)(s)-(D_\va\Phi_1(a,\va)\,\psi)(s)-(D_\va\Phi_1(a_{k},\va_{k})\,\psi)(\overline s)
+(D_\va\Phi_1(a,\va)\,\psi)(\overline s),
$$
use expression \eqref{diff}, and add and subtract terms as above for $D_a\Phi_1$. The first integral \eqref{diff}
is easier to deal with, while the second has the same structure as in $D_a\Phi_1$.
\end{proof}

Next, we deal with the second term $\Phi_2$ in the operator $\Phi$. Recall
that it is given by expression \eqref{Phi220}-\eqref{F30},
\begin{equation}\label{Phi22}
\Phi_2(a,\lambda,\va)=\int_\R \delta_0\va \ F_2(t,a,\lambda,\delta_0\va)\, dt,
\end{equation}
where
\begin{equation}\label{F3}
 F_2(t,a,\lambda,r)= \int_0^1 \frac{d\overline\tau}{\left\{ t^2+(2\lambda R+ ar
\overline\tau)^2\right\}^{\frac{2+\alpha}{2}}}.
\end{equation}

With this expression at hand, we prove our last lemma.

\begin{lemma}\label{lemPhi2}
 There exists $\nu>0$ small enough $($depending only on $\alpha)$ for which the operator 
 $\Phi_2=\Phi_2 (a,\lambda,\varphi): (-\nu,\nu)\times (1/2,3/2)\times B_{10}(0)\subset \R\times\R\times
 X\to Y$ is of class~$C^1$. 
\end{lemma}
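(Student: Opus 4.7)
The plan is to exploit the observation that, unlike $\Phi_1$, the operator $\Phi_2$ carries the regular measure $dt$ rather than the singular $d\mu(t) = dt/|t|^{1+\alpha}$, so no principal value and no cancellation is needed. The preliminary step is to choose $\nu>0$ small enough to force the denominator $t^2 + (2\lambda R + ar\overline\tau)^2$ appearing in \eqref{F3} to stay uniformly bounded below on the parameter domain. Since $\lambda \in (1/2,3/2)$ gives $2\lambda R \geq R$, and $|\delta_0\varphi(s,t)| \leq 2\|\varphi\|_{L^\infty} \leq 2\|\varphi\|_X \leq 20$ for $\varphi \in B_{10}(0)$, taking $\nu$ small enough that $20\,\nu \leq R/2$ ensures $2\lambda R + ar\overline\tau \geq R/2$ pointwise; because $R$ depends only on $\alpha$ by Lemma~\ref{existsR}, so does $\nu$. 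Consequently, $F_2$ together with all its partial derivatives in $a,\lambda,r$ is smooth and bounded in $(a,\lambda,r)$ and decays in $t$ like $(1+t^2)^{-(2+\alpha)/2}$.

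With these uniform bounds at hand, well-definedness of $\Phi_2(a,\lambda,\varphi)$ as in \eqref{Phi22} is immediate from the integrability of $(1+t^2)^{-(2+\alpha)/2}$ on $\R$. To show $\Phi_2(a,\lambda,\varphi)\in Y$, I would use that $s\mapsto \delta_0\varphi(s,t)$ is Lipschitz in $s$ uniformly in $t$ with constant at most $2\|\varphi\|_X$, and split
\begin{align*}
\Phi_2(a,\lambda,\varphi)(s)-\Phi_2(a,\lambda,\varphi)(\overline s)
&= \int_\R\{\delta_0\varphi(s,t)-\delta_0\varphi(\overline s,t)\}\,F_2(t,a,\lambda,\delta_0\varphi(s,t))\,dt\\
&\quad + \int_\R \delta_0\varphi(\overline s,t)\,\{F_2(t,a,\lambda,\delta_0\varphi(s,t))-F_2(t,a,\lambda,\delta_0\varphi(\overline s,t))\}\,dt,
\end{align*}
and bound each piece by $C|s-\overline s|$ using the intermediate value theorem on $F_2$ together with the uniform bound on $\partial_r F_2$. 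So $\Phi_2(a,\lambda,\varphi)$ is in fact Lipschitz in $s$, which embeds continuously into $Y=C^{0,\beta-\alpha}_{p,e}$.

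To establish the $C^1$ property, I would differentiate under the integral sign to obtain the formal derivatives
$$D_a\Phi_2=\int_\R\delta_0\varphi\cdot\partial_a F_2\,dt,\qquad D_\lambda\Phi_2=\int_\R\delta_0\varphi\cdot\partial_\lambda F_2\,dt,$$
$$(D_\varphi\Phi_2)\,\psi=\int_\R\{\delta_0\psi\cdot F_2+\delta_0\varphi\cdot\partial_r F_2\cdot\delta_0\psi\}\,dt,$$
and verify that these are indeed Fréchet derivatives via dominated convergence, using the uniform pointwise bounds on the first and second derivatives of $F_2$ to control Taylor remainders. Each formal derivative has exactly the same integral structure as $\Phi_2$ itself, so the same Lipschitz-in-$s$ argument places it in $Y$; continuity in $(a,\lambda,\varphi)$ follows by the standard add-and-subtract trick, splitting differences into linear-in-the-increment pieces and dominating uniformly in $t$.

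The main technical point is the continuity of $D_\varphi\Phi_2$ in the operator norm on $\mathcal{L}(X,Y)$, because $\varphi$ enters both linearly through $\delta_0\varphi$ and nonlinearly inside the last slot of $F_2$. However, in sharp contrast with Lemma~\ref{lemPhi1}, the absence of a singular kernel removes any need for the cancellation $\delta_-+\delta_+$ or the splitting $\min(|s-\overline s|\,|t|^{\beta-1},|t|^\beta)$: every estimate reduces to an $L^\infty$ bound on $F_2$ and its derivatives times $\|\delta_0\psi\|_{L^\infty}\leq 2\|\psi\|_X$, integrated against $(1+t^2)^{-(2+\alpha)/2}\,dt$. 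The bookkeeping parallels that of Lemma~\ref{lemPhi1} but is considerably shorter, and this is the only place where minor care is needed.
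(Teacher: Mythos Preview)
Your proposal is correct and follows essentially the same route as the paper: bound the denominator in $F_2$ below by choosing $\nu$ small, observe that $\Phi_2$ and its derivatives are in fact Lipschitz in $s$ (not merely $C^{0,\beta-\alpha}$) via the same add-and-subtract decomposition, and then pass to continuity of the derivatives. The only minor tactical difference is that for the continuity of $D_{(a,\lambda,\varphi)}\Phi_2$ the paper, instead of a direct add-and-subtract estimate in the $Y$-norm, uses the uniform Lipschitz bound together with Arzel\`a--Ascoli compactness and pointwise (dominated-convergence) identification of the limit to conclude convergence in the weaker H\"older topology of $Y$.
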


\begin{proof}
Recall that $R>0$ has been chosen to depend only on $\alpha$. 
We take $|a|\leq \nu$, with $\nu$ small depending on how large
is $R=R(\alpha)$.
Since $|r|=|\delta_0\va|\leq 2\Vert\va\Vert_\infty \leq 20$,
$\lambda\in (1/2,3/2)$, and $\overline\tau\in [0,1]$, 
in the definition \eqref{F3} of $F_2$ we have
$$
R/2\leq R-20a\leq 2\lambda R + ar\overline\tau\leq 3R + 20 a\leq 4R.
$$
It follows that $F_2$ is a smooth, positive, and bounded function of $(t,a,\lambda,r)
\in \R\times (-\nu,\nu)\times(1/2,3/2)\times(-20,20)$. In addition, we have that
\begin{equation}\label{decay}
 \lim_{|t|\to\infty}\frac{F_2(t,a,\lambda,r)}{|t|^{-2-\alpha}} = 1
\end{equation}
uniformly as $(a,\lambda,r)$ belong to the above sets.

It follows that the integrand in \eqref{Phi22} is integrable, and hence $\Phi_2$ is well defined.

Next, we see that $\Phi_2(a,\lambda,\va)\in Y$ and has $Y$-norm controlled by an upper bound on
$|a|+\lambda+\Vert\va\Vert_X$. Indeed, we can see even more: that $\Phi_2(a,\lambda,\va)$ 
is Lipschitz if $\va$ is Lipschitz, and $\Phi_2(a,\lambda,\va)$ 
has Lipschitz norm controlled by an upper bound on
$|a|+\lambda+\Vert\va\Vert_{\text{Lip}}$. To verify this, the $L^\infty$ norm of $ \Phi_2(a,\lambda,\va)$
has already been treated above. Next, we look at $\Phi_2(a,\lambda,\va)(s)-
\Phi_2(a,\lambda,\va)(\overline s)$, add and subtract the term $\int_\R \delta_0\va(\overline s,t)
\, F_2(t,a,\lambda,\delta_0\va(s,t))\, dt$, use that $F_2$ and $\partial_rF_2$ are bounded, 
use the intermediate value formula for the last two terms, and that
$$
|\delta_0\va(s,t)-\delta_0\va (\overline s,t)|\leq 2\Vert\va\Vert_{\text{Lip}} |s-\overline s|
$$
to conclude the desired bound. For the convergence of the integrals, use \eqref{decay} and that $|\partial_r F_2|\leq C 
|t|^{-3-\alpha}$ for $|t|$ large.

Finally, we must prove that $\Phi_2$ is $C^1$ with respect to $(a,\lambda,\va)$ with values in $Y$.
For this, we compute $D_{a}\Phi_{2}$, $D_{\lambda}\Phi_{2}$, and $D_{\va}\Phi_{2}$ at $(a,\lambda,\va)$ as we
did for $\Phi_{1}$ in the proof of Lemma~\ref{lemPhi1}. The same argument as in the previous paragraph gives that
these functions are Lipschitz and have their Lipschitz norm controlled by an upper bound on
$|a|+\lambda+\Vert\va\Vert_{\text{Lip}}$. From this, it follows the continuity of  $D_{a}\Phi_{2}$, $D_{\lambda}\Phi_{2}$, and $D_{\va}\Phi_{2}$ 
as  functions of $(a,\lambda,\va)$ with values in~$Y$.
Indeed, if $(a_k,\lambda_{k},\va_k)\to (a,\lambda,\va)$ in $\R\times \R\times X$ then, by the previous bounds, we have that
the sequence $D_{(a,\lambda,\va)} \Phi_2(a_k,\lambda_{k},\va_k)$ is uniformly bounded in the Lipschitz norm. 
By Arzel\`a-Ascoli theorem, a subsequence converges in the weaker H\"older norm of $Y$ to a function in $Y$. 
This function must be $D_{(a,\lambda,\va)} \Phi_2(a,\lambda,\va)$,
since the integrals in the expressions for $D_{(a,\lambda,\va)} \Phi_2(a_k,\lambda_{k},\va_k)$ convergence to 
the corresponding integrals for $(a,\lambda,\va)$. This follows from the dominated convergence theorem. 
As a consequence, the full sequence converges in $Y$ to $D_{(a,\lambda,\va)} \Phi_2(a,\lambda,\va)$.
\end{proof}

\end{document}